\documentclass[11pt, reqno]{amsart}
\usepackage{amssymb,latexsym,amsmath,amscd,amsthm,graphicx, color}
\usepackage{enumitem}
\usepackage[all]{xy}
\usepackage{pgf,tikz}
\usepackage{mathrsfs}
\usetikzlibrary{arrows}
\usepackage{pgfplots}
\pgfplotsset{compat=1.15}
\usepackage[left=0.6 in, top=0.6 in, right=0.6 in, bottom=0.3 in]{geometry}
\usepackage{changepage}
\usepackage{float}        
\usepackage{subcaption}   
 \usepackage{algorithm}
\usepackage{algpseudocode}
 \usepackage{booktabs}
 \usepackage[numbers]{natbib}
 
\pgfplotsset{compat=1.18} 

\allowdisplaybreaks
\makeatletter
\newcommand{\setword}[2]{%
	\phantomsection
	#1\def\@currentlabel{\unexpanded{#1}}\label{#2}%
}
\makeatother

\usepackage[linkcolor=blue, urlcolor=blue, citecolor=blue,
colorlinks, bookmarks]{hyperref}

\definecolor{uuuuuu}{rgb}{0.26666666666666666,0.26666666666666666,0.26666666666666666}
\definecolor{xdxdff}{rgb}{0.49019607843137253,0.49019607843137253,1.}
\definecolor{ffqqqq}{rgb}{1.,0.,0.}
\definecolor{ffqqqq}{rgb}{1.,0.,0.}
\definecolor{ffxfqq}{rgb}{1.,0.4980392156862745,0.}

\definecolor{uuuuuu}{rgb}{0.26666666666666666,0.26666666666666666,0.26666666666666666}
\definecolor{qqwuqq}{rgb}{0.,0.39215686274509803,0.}
\definecolor{zzttqq}{rgb}{0.6,0.2,0.}
\definecolor{xdxdff}{rgb}{0.49019607843137253,0.49019607843137253,1.}
\definecolor{qqqqff}{rgb}{0.,0.,1.}
\definecolor{cqcqcq}{rgb}{0.7529411764705882,0.7529411764705882,0.7529411764705882}
\definecolor{sqsqsq}{rgb}{0.12549019607843137,0.12549019607843137,0.12549019607843137}
\definecolor{uuuuuu}{rgb}{0.26666666666666666,0.26666666666666666,0.26666666666666666}
\definecolor{ffqqqq}{rgb}{1,0,0}
\definecolor{xdxdff}{rgb}{0.49019607843137253,0.49019607843137253,1}

\definecolor{yqqqyq}{rgb}{0.5019607843137255,0,0.5019607843137255}
\definecolor{qqqqff}{rgb}{0,0,1}
\definecolor{ffqqqq}{rgb}{1,0,0}
\definecolor{ffqqff}{rgb}{1,0,1}

\theoremstyle{plain}

\newtheorem{theorem}[subsection]{Theorem}

\newtheorem{corollary}[subsection]{Corollary}
\newtheorem{lemma}[subsection]{Lemma}
\newtheorem{definition}[subsection]{Definition}

\theoremstyle{definition}

\newtheorem{remark}[subsection]{Remark}

\newcommand{\D}[1]{\mathbb{#1}}

\begin{document}

	\title{Optimal Quantization for Nonuniform Densities on Spherical Curves}
 
 \author{$^1$Silpi Saha}
  \author{$^2$Sangita Jha}
	\author{$^3$Mrinal Kanti Roychowdhury}
 
		\address{$^{1, 2}$Department of Mathematics, National Institute of Technology Rourkela\\
		Rourkela, India 769008.}
 	\address{$^{3}$School of Mathematical and Statistical Sciences\\
		University of Texas Rio Grande Valley\\
		1201 West University Drive\\
		Edinburg, TX 78539-2999, USA.}

	\email{$^1$silpi.saha.2000@gmail.com, $^2$jhasa@nitrkl.ac.in, $^3$mrinal.roychowdhury@utrgv.edu}

	\subjclass[2020]{60E05, 49Q20, 53C22, 60D05, 65D30. }
	\keywords{Optimal Quantization, Spherical Curves, Nonuniform Density, Centroid Condition, Voronoi Tessellation, Geodesic Distance, High-Resolution Asymptotics}
	
	\date{}
	\maketitle
	
	\pagestyle{myheadings}\markboth{Silpi Saha, Sangita Jha, Mrinal Kanti Roychowdhury}{Optimal Quantization for Nonuniform Densities on Spherical Curves}
 \begin{abstract}
 We present an analysis of optimal quantization of probability measures with nonuniform densities on spherical curves. We begin by deriving the centroid condition, followed by a high-resolution asymptotic analysis to establish the point-density formula. We further quantify the asymptotic error formula for the nonuniform densities. We apply these theorems to the von Mises distributions and characterize the optimal condition. We also provide applications using the high-resolution asymptotic and its corresponding error formula. Our results can be used in geometric probability theory and quantization theory of spherical curves. 
 \end{abstract}

 \section{Introduction}
Quantization theory fundamentally deals with the approximation of a probability distribution by a finite set of points. It provides a mathematical framework for approximating a continuous distribution by a discrete measure supported on finitely many points (which can also be called codepoints) to minimize distortion error \cite{GrafLuschgy2000, GrayNeuhoff1998}. In the classical Euclidean formulation, the problem reduces to selecting optimal codebooks that minimize the mean squared error between random samples and their nearest neighbours, which provide deep connections with vector quantization \cite{GershoGray1991, Zador1982}.
The subject of quantization has broad applications in the domain of signal processing, data compression and other computer science-related domains \cite{GershoGray1991, LindeBuzoGray1980, Pag`esPrintems2003}. Over the past three decades, a rich asymptotic theory has been established for optimal quantization in $\mathbb{R}^d$, including existence, characterization, and high–resolution behaviour of optimal quantizers for both absolutely continuous and singular measures \cite{GrafLuschgy2000, BucklewWise1982, DeuschelZhong2019, FortPag`es2015}.
More recently, researchers became interested in curved geometric formulations, where the underlying sample space is a Riemannian manifold, and the distortion is measured by the intrinsic geodesic distance \cite{AsympQuantManifold2025, LeGruyer2015, Rangarajan2017}. On such manifolds, the interaction between the geodesic structure and the Voronoi cell geometry leads to new analytical phenomena. These were absent in the Euclidean case.

Optimal quantization deals with optimal codepoints for which the distortion error will be minimum. In recent years, a lot of works were done in the realm of optimal quantization connected with theory of manifolds, constraint-unconstraint applications and so on. In the present scenario, the vision has been shifted towards the geometrical point of view for quantization, particularly on spherical curves, line elements, and other irregular shapes. 
Within this broader context, optimal quantization on spherical domains has started to develop as a distinct research direction. Early work in directional statistics focused primarily on parametric modeling (e.g., von Mises and von Mises–Fisher families) and hypothesis testing \cite{MardiaJupp2000, Sra2012}. 
Subsequent contributions analyzed spherical codes and energy–minimizing point configurations, which are closely related to optimal quantizers under certain distortion criteria \cite{CohnKumar2007}. More recently, quantitative results on the asymptotic behaviour of quantization errors for measures supported on compact Riemannian manifolds were studied, showing that classical high–resolution formulas extend to the manifold setting under appropriate geometric and measure–theoretic assumptions \cite{AsympQuantManifold2025}. At the same time, numerical and algorithmic studies have explored spherical $k$–means and related clustering methods on the unit sphere, emphasizing applications in text mining and large–scale data analysis.

Despite this progress, most of the currently available results on quantization on spherical curves focus on the simplest case of uniform densities with respect to intrinsic arc–length \cite{GershoGray1991, mk}. For one–dimensional Euclidean and circular models with uniform densities, the optimal codepoints form a regular partition and the quantization error admits an explicit closed form, scaling as $V_n \asymp L^2 n^{-2}$ for squared–error distortion \cite{GrafLuschgy2000,RosenblattRoychowdhury2018,SalinasRoy2020}. Analogous behavior persists for uniform measures on geodesic circles and arcs on the sphere, where symmetry and convexity arguments enforce equal–length Voronoi cells and midpoints as optimal representatives. By contrast, the case of \emph{nonuniform} densities on spherical curves remains much less explored, even though it is arguably more relevant in applications where data exhibit anisotropy, clustering, or localized concentration along preferred directions \cite{ MardiaJupp2000}.

Nonuniform densities on spherical curves arise in a variety of scientific and engineering contexts. In directional statistics, multimodal or highly concentrated distributions occur when multiple preferred directions coexist, as in wind–direction analysis, paleomagnetism, or molecular orientation data \cite{MardiaJupp2000, Mardia2025Legacy}. In signal processing and robotics, phase variables and angular states may follow wrapped Gaussian or von Mises–type distributions, leading naturally to nonuniform densities on circles and higher–dimensional spheres \cite{MardiaJupp2000,Sra2012,OptimalQuantizationCircular2016}. In Bayesian filtering and tracking, circular and spherical densities are routinely approximated by finite mixtures or weighted point sets, and quantization–based constructions have been proposed to control the approximation error in the prediction and update steps \cite{OptimalQuantizationCircular2016}. In machine learning, manifold–aware density estimation and generative modeling on spheres exploit nonuniform distributions to capture complex directional patterns \cite{AsympQuantManifold2025, DeBortoli2021}.

Previous studies on quantization have established optimality conditions and high-resolution asymptotics for Euclidean spaces and for uniform distributions including spheres and curves \cite{GrafLuschgy2000, GershoGray1991}. In these classical settings, Voronoi partitions, centroid conditions, and the asymptotic decay rates for the distortion error are well understood. Directional distributions such as the von Mises family have also been studied in statistics and directional data analysis \cite{MardiaJupp2000, Sra2012}.

In our work, we extend quantization theory on spherical curves of probability measures with nonuniform densities. First, for a geodesic curve parameterized by arc length and equipped with a nonuniform density $h(s)$, we derive intrinsic first-order optimality conditions for squared geodesic distortion. In particular, we show that Voronoi boundaries are characterized by geodesic equidistance between neighbouring representatives, and that each codepoint satisfies a centroid condition given by a weighted intrinsic mean over its cell. Second, in the high-resolution regime, we obtain an Euler–Lagrange characterization that links the optimal local cell size directly to the underlying density. Our result provides a spherical analogue of the classical high-resolution quantization rules for nonuniform densities developed in the Euclidean setting \cite{GrafLuschgy2000, GershoGray1991}. Then, for nonuniform densities on spherical curves of finite length, we show that the optimal quantization error still decays at rate $n^{-2}$, and we derive an explicit asymptotic constant that depends on the density. The constant reduces to the classical value $L^{2}/12$ in the uniform case.
Further, we apply our theory to von Mises–type densities on great circles and carry out both analytical and numerical studies. Our results describe how optimal codepoints deform as the concentration parameter varies. The codepoints range from nearly uniform spacing at low concentration to strong clustering around the modal direction at high concentration \cite{MardiaJupp2000, Sra2012}. Finally, we compare geodesic and chordal distortion metrics for nonuniform and mixture densities on spherical curves. We identify the regimes in which curvature-induced discrepancies significantly affect the structure of optimal partitions and the associated error constants.

The paper is organised as follows. In Section~\ref{sec:prelim}, we review the basics related to quantization theory. In Section~\ref{sec:iohr},  we discuss the theoretical analysis of the problem, to understand the intrinsic optimality and high-resolution density-length relationship. In Section~\ref{sec:aqvm}, we present a detailed analysis on asymptotic quantization error, accompanied by its implication of von Mises optimal quantizers. Numerical and graphical examples are provided in Section~\ref{sec:appli}. 
Finally, we conclude the paper in Section~\ref{sec:conclu}.

\section{Preliminaries}
\label{sec:prelim}
In this section, we review the fundamentals of quantization errors, followed by geodesic distance and length with respect to a Riemannian metric. For a detailed discussion, we refer the reader to see the references \cite{GrafLuschgy2000, GershoGray1991, mk}.
\begin{definition}
Let $(M, d)$ be a metric space and P be a Borel probability measure on $M$. We also consider a set of $n$ \emph{means}, $Q = \{q_1, q_2, \ldots, q_n\} \subset M$, which minimizes the expected distortion. Now, for a given exponent $r > 0$, the distortion of order $r$ for the set $Q$ is defined as
\begin{equation*}
V_{r}(P; Q) := \int_M \min_{q \in Q} d(x, q)^r \, dP(x).
\end{equation*}
The corresponding $n$th quantization error is
\begin{equation*}
V_{n,r}(P) := \inf \{ V_{r}(P; Q) : Q \subset M, \, |Q| \leq n \}.
\end{equation*}
Any set $Q^*$ that attains this infimum is called an \emph{optimal set of $n$-means}. Throughout this work, we focus on the case of squared distortion error, setting $r=2$, we can write $V_n(P) := V_{n,2}(P)$.
\end{definition}
In the Euclidean case $M = \mathbb{R}^d$ with the standard norm, existence of optimal codebooks, structural properties of Voronoi partitions, and high–resolution asymptotics are well understood for a large class of measures, including absolutely continuous and certain singular distributions \cite{GrafLuschgy2000,Pag`esPrintems2003, DeuschelZhong2019,FortPag`es2015}. The codebook is the complete finite set of the codepoints. Codepoints are the representative points to approximate values from a continuous probability distribution to minimize the distortion.

On smooth Riemannian manifolds, the intrinsic quantization problem is obtained by taking $d$ to be the geodesic distance and $P$ a Borel probability measure on $M$ \cite{AsympQuantManifold2025}. In this setting, the geometry of $M$ influences the shape of optimal Voronoi cells and the asymptotic behaviour of $V_{n,r}(P)$ via curvature and volume growth properties \cite{AsympQuantManifold2025}.
\begin{definition}
    A great circle on a sphere is formed where the sphere is cut by a plane passing through its center. It can also be described as a circle drawn on the sphere whose center is the same as the center of the sphere.
\end{definition}
\begin{definition}
     Let $\D S^2:= \{ x \in \mathbb{R}^3 : \|x\| = 1 \}$
be the unit sphere, equipped with the \emph{geodesic distance} $d_G$. For two points $x, y \in \D S^2$, this distance is given by the central angle:
\begin{equation*}
d_G(x, y) = \arccos(\langle x, y \rangle),
\end{equation*}
where $\langle \cdot, \cdot \rangle$ is the standard inner product in $\mathbb{R}^3$. In spherical coordinates $(\phi, \theta)$, where $\phi \in [-\pi/2, \pi/2]$ is the latitude and $\theta \in [0, 2\pi)$ is the longitude, with
\begin{equation*}
x(\phi, \theta) = (\cos\phi \cos\theta, \cos\phi \sin\theta, \sin\phi).
\end{equation*}
The geodesic distance between $x_1=(\phi_1, \theta_1)$ and $x_2=(\phi_2, \theta_2)$ becomes
\begin{equation*}
d_G(x_1, x_2) = \arccos\left( \sin\phi_1 \sin\phi_2 + \cos\phi_1 \cos\phi_2 \cos(\theta_1 - \theta_2) \right).
\end{equation*}
\end{definition}
\begin{definition}
    Let \(\alpha = \{a_1, \dots, a_n\} \subset \D S^2\) be a codebook. 
The sphere can be partitioned into \emph{spherical Voronoi regions} defined by

\[
R(a_i \mid \alpha) = \Bigl\{ x \in \D S^2 : d_G(x, a_i) \leq d_G(x, a_j), \ 
\forall \, j \neq i, \; 1 \leq j \leq n \Bigr\}.
\]

Each region \(R(a_i \mid \alpha)\) contains all points on the sphere whose geodesic distance to \(a_i\) is less than or equal to the distance to any other codepoint in \(\alpha\).

\end{definition}

\begin{definition}
For a smooth curve $\Gamma : I \to \mathbb{S}^2$, where $I \subset \mathbb{R}$ is an interval, its length with respect to the  Riemannian metric is
\begin{equation*}
L(\Gamma) = \int_I \big\|\Gamma'(t)\big\| \,\mathrm{d}t,
\end{equation*}
and the arc–length parameter $s$ satisfies $\|\partial_s \Gamma(s)\|=1$. On a great circle, the restriction of $d_{\mathbb{S}^2}$ coincides with the standard angular distance modulo $2\pi$. Whereas on a small circle the intrinsic metric is obtained by rescaling the angular coordinate by the cosine of the latitude \cite{MardiaJupp2000}. In all cases of interest, we will consider probability measures supported on such one–dimensional spherical curves, equipped with their intrinsic geodesic distance and arc–length measure.
\end{definition}

Therefore, for a smooth, one-dimensional curve $\Gamma \subset \D S^2$, parameterized by arc-length $s$, for which $P$ be a probability measure on $\Gamma$ defined by a nonuniform continuous density function $h(s)$ with respect to the arc-length measure $ds$, we have $dP(s) = h(s) ds$. The distortion for a codebook $Q = \{q_1, \ldots, q_n\}$ and a corresponding Voronoi cell $\{R_1, \ldots, R_n\}$ of $\Gamma$ is then
\begin{equation*}
V_n(P; Q) = \sum_{j=1}^n \int_{R_j} d_G(s, q_j)^2 h(s) \, ds.
\end{equation*}

The introduction of the nonuniform density $h(s)$ fundamentally alters the nature of the optimization problem. The symmetry that forced equal-sized cells in the uniform case is broken. The optimal Voronoi cells are no longer of equal length, and the optimal codepoints are no longer the arc-length midpoints of their cells. Instead, the cell boundaries and representatives must shift to balance the \emph{weighted} distortion, leading to a more complex, density-dependent equilibrium condition.

When the probability measure $P$ is the uniform distribution with respect to arc-length on such a curve $\Gamma \subset \D S^2$ of total length $L$, the structure of the optimal quantizers is both elegant and explicit. The foundational result, which we term the \emph{Uniform Geodesic Principle}, is as follows:

\begin{theorem}\cite{GrafLuschgy2000, GershoGray1991}
Let $(\Gamma, d_G)$ be a one-dimensional geodesic curve of total length $L$, and let $P$ be the uniform probability distribution with respect to arc-length. For squared distortion $(r=2)$, an optimal set of $n$-means partitions $\Gamma$ into $n$ Voronoi cells of equal arc-length $L/n$. Each optimal codepoint is located at the geodesic midpoint of its cell, and the quantization error is given by 
\begin{equation*}
V_n(P) = \frac{L^2}{12 n^2}.
\end{equation*}
\end{theorem}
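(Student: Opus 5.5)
The plan is to reduce the statement to a one-dimensional problem in the arc-length parameter and then solve that problem by the centroid condition followed by a convexity argument.

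\textbf{Reduction.} Parametrize $\Gamma$ by arc length, so that $\Gamma$ is identified either with $[0,L]$ (a geodesic arc) or with $\mathbb{R}/L\mathbb{Z}$ (a full great circle). On a great-circle arc with $L\le \pi$, the restriction of $d_G$ agrees with $|s-t|$. On the full circle it agrees with the circular distance $\min(|s-t|,L-|s-t|)$, as noted in the preliminaries. In these coordinates $P$ is normalized Lebesgue measure $ds/L$. I expect this identification to be the main subtle point: for an arc longer than $\pi$, the ambient distance $d_G$ is no longer the arc-length distance. I would therefore state the hypothesis explicitly: either $d_G$ is understood as the intrinsic distance along $\Gamma$, or $L\le\pi$ in the arc case.

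\textbf{Structure of an optimal codebook.} An optimal $n$-means set exists by the standard compactness argument, since $\Gamma$ is compact and $V(P;\cdot)$ is continuous on $\Gamma^n$. Fix a codebook $q_1<\dots<q_n$, ordered along $\Gamma$. Each Voronoi region $R_j\cap\Gamma$ is then an arc, whose endpoints are the arc-length midpoints between consecutive codepoints; on the circle these are taken cyclically. One can assume the $q_j$ are distinct, because a merged point can be split to strictly lower the error whenever $n$ does not exceed the cardinality of the support.

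\textbf{Cost of one cell.} Fix a cell, an arc $[a,b]$ of length $\ell$. The function $q\mapsto \int_a^b (s-q)^2\,ds/L$ is a strictly convex quadratic in $q$. Its minimizer is the centroid $q=(a+b)/2$, so the optimal codepoint is the midpoint. The resulting cost is
\[
\frac{2}{L}\int_0^{\ell/2} t^2\,dt=\frac{\ell^3}{12L}.
\]
For any partition into arcs of lengths $\ell_1,\dots,\ell_n$ with $\sum_j \ell_j=L$, the error is therefore at least $\frac{1}{12L}\sum_j \ell_j^3$. This quantity is the true error of an optimal codebook, because its Voronoi cells are arcs with midpoint codepoints.

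\textbf{Convexity.} By Jensen's inequality (equivalently, the power-mean inequality) applied to the convex function $x\mapsto x^3$,
\[
\sum_j \ell_j^3\ \ge\ n\Bigl(\frac{L}{n}\Bigr)^3,
\]
with equality if and only if every $\ell_j=L/n$. This lower bound is attained by the configuration of $n$ equally spaced codepoints placed at the midpoints of $n$ equal arcs. Hence $V_n(P)=\frac{1}{12L}\cdot\frac{L^3}{n^2}=\frac{L^2}{12n^2}$, and strict convexity forces every optimal codebook to have exactly this structure.

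Beyond the distance identification already flagged, the remaining work is routine.
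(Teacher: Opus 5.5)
The paper gives no proof of this statement. It is quoted as a known result from Graf--Luschgy and Gersho--Gray, so your argument supplies a proof where the paper has none, and it is correct.

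Your route is the standard one-dimensional argument. For every codebook on $\Gamma$, the Voronoi cells are arcs on which the distance to the codepoint is the along-arc distance. The midpoint minimization then gives $V(P;Q)\ge \frac{1}{12L}\sum_j \ell_j^3$, and Jensen for $x\mapsto x^3$ bounds this below by $L^2/(12n^2)$. Equal spacing attains the bound, and the equality case forces $\ell_j=L/n$ and midpoint codepoints. This gives existence, the value of $V_n(P)$, and the structure of every optimal codebook at once. Your separate compactness step is therefore not needed. Neither is the sentence asserting in advance that an optimal codebook has midpoint codepoints, which is stated there without justification; the equality case delivers both.

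The hypothesis you flag is the right one to make explicit. The paper's $d_G=\arccos\langle x,y\rangle$ coincides with arc length only on great-circle arcs of length at most $\pi$, or cyclically on the full great circle. The paper's own follow-up example, the small circle with $L=2\pi\cos\phi_0$, tacitly uses the intrinsic distance, which is your first option.

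You should also state explicitly that codepoints lie on $\Gamma$, as the metric space $(\Gamma,d_G)$ dictates. If they were allowed anywhere on $\mathbb{S}^2$, as in Theorem~\ref{thm:main}, the claim would already fail for $n=1$ on a great circle. A pole is at distance $\pi/2$ from every point of the equator, giving error $\pi^2/4<\pi^2/3=L^2/12$.
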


This result demonstrates that for uniform measures, the optimal quantization problem on a spherical curve reduces to its one-dimensional Euclidean counterpart, with the sphere's curvature influencing only the total length $L$. For a small circle at latitude $\phi_0$, the length is $L_{\phi_0} = 2\pi \cos\phi_0$, leading to a quantization error of $V_n = \frac{\pi^2 \cos^2\phi_0}{3 n^2}$, explicitly quantifying the curvature effect.

\section{Intrinsic optimality and high-resolution structure}
\label{sec:iohr}
In this section we establish the intrinsic optimality conditions for nonuniform quantization on spherical curves and derive the associated high–resolution density–length relationship. Theorem~\ref{thm:main} gives the geometric characterization of optimal Voronoi cells and representatives on a geodesic curve. Theorem~\ref{thm:highres} identifies the asymptotic regime of high-resolution quantization (\(n \to \infty\)). The local structure of the optimal partition can be characterized by a deterministic relationship between the point density and the source probability density.

\begin{theorem} \label{thm:main}
Let $\Gamma$ be a smooth, one-dimensional geodesic curve on $\D S^2$ parameterized by arc-length $s \in [0, L]$. Let $P$ be a probability measure on $\Gamma$ with a continuous nonuniform density $h(s) > 0$ with respect to the arc-length measure $ds$, so that $dP(s) = h(s) ds$. If $Q^* = \{q_1^*, \ldots, q_n^*\} \subset \D S^2$ is an optimal set of $n$-means inducing a Voronoi partition $\{[s_0, s_1], [s_1, s_2], \ldots, [s_{n-1}, s_n]\}$ of $\Gamma$ (with $s_0=0, s_n=L$), then the following conditions must hold:

\begin{enumerate}
\item[(a)] \emph{Boundary condition.} For each internal boundary point $s_j$ ($1 \le j \le n-1$),
    \begin{equation*} \label{eq:boundary_condition}
    d_G(s_j, q_j^*) = d_G(s_j, q_{j+1}^*),
    \end{equation*}
i.e,\ $s_j$ is a geodesic equidistance point between the two representatives.

\item[(b)] \emph{Representative condition.} For each cell $R_j = [s_{j-1}, s_j]$, the representative $q_j^*$ is the weighted intrinsic mean:
    \begin{equation*} \label{eq:rep_condition}
    q_j^* = \arg\min_{q \in \D S^2} F_j(q), \quad \text{where} \quad F_j(q) = \int_{s_{j-1}}^{s_j} d_G(s, q)^2  h(s)  ds.
    \end{equation*}

Consequently, $q_j^*$ is the unique weighted Fr\'echet mean of $P$ corresponding to $[s_{j-1},s_j]$.

\item[(c)] \emph{Centroid condition on a great circle.} Furthermore, if $\Gamma$ is a great circle and the support of the conditional distribution $P(\cdot | R_j)$ is contained in an open hemisphere relative to $q_j^*$, then $q_j^*$ lies on $\Gamma$ and its angular coordinate $\theta_j^*$ satisfies:
    \begin{equation*} \label{eq:local_centroid}
    \int_{s_{j-1}}^{s_j} (\theta(s) - \theta_j^*)  h(s)  ds = 0,
    \end{equation*}
    where $\theta(s)$ is the angular coordinate along $\Gamma$ and $\theta_j^*$ is the intrinsic $h$–weighted mean angle on the cell.
\end{enumerate}
\end{theorem}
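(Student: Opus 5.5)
The proof follows the two standard steps for necessary conditions in quantization: optimize the partition for fixed codepoints, then optimize each codepoint for a fixed cell. Throughout we use that the distortion splits as
\[
V_n(P;Q)=\sum_{j=1}^n F_j(q_j), \qquad F_j(q)=\int_{s_{j-1}}^{s_j} d_G(s,q)^2h(s)\,ds .
\]
We also use that $h>0$ is continuous, so every nondegenerate subinterval of $[0,L]$ has positive $P$-mass.

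\textbf{Part (a).} Fix $Q^*$. Among all partitions, the distortion is minimized by assigning each point to a nearest codepoint. Hence the cell $[s_{j-1},s_j]$ is contained in $R(q_j^*\mid Q^*)$, and $[s_j,s_{j+1}]$ is contained in $R(q_{j+1}^*\mid Q^*)$.

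Consider the continuous function $g(s)=d_G(s,q_j^*)-d_G(s,q_{j+1}^*)$ along $\Gamma$. It satisfies $g\le 0$ just to the left of $s_j$ and $g\ge 0$ just to the right. Suppose instead that $g(s_j)\neq 0$, say $g(s_j)<0$. By continuity, $g<0$ on a small interval $(s_j,s_j+\varepsilon)$. Moving the boundary to $s_j+\varepsilon$ then strictly lowers the distortion, because this interval has positive mass. That contradicts optimality. Hence $g(s_j)=0$, which is the equidistance condition.

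The same argument is cleaner as a first-order condition. Differentiate $V_n$ with respect to $s_j$ with the codepoints held fixed:
\[
\partial_{s_j}V_n=\bigl(d_G(s_j,q_j^*)^2-d_G(s_j,q_{j+1}^*)^2\bigr)h(s_j).
\]
At an interior optimum this derivative vanishes. Since $h(s_j)>0$, the two distances are equal.

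\textbf{Part (b).} Now fix the cells. Only the term $F_j$ in the sum depends on $q_j$. If some $q\in\mathbb S^2$ had $F_j(q)<F_j(q_j^*)$, then replacing $q_j^*$ by $q$ would give a codebook $Q'$ with $V_n(P;Q')\le \sum_i F_i(q_i')<V_n(P;Q^*)$. The first inequality holds because nearest-neighbour reassignment only lowers distortion. This contradicts optimality, so $q_j^*$ minimizes $F_j$, i.e.\ it is a weighted Fréchet mean.

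For uniqueness, we argue as follows. The cell is a short arc: its length is less than $\pi$ once $n\ge 2$ on a great circle, and in general the mass lies within a geodesic ball of radius less than $\pi/2$. The squared distance $d_G(\cdot,x)^2$ is strictly geodesically convex on such a ball. Therefore $F_j$ is strictly convex there, and its minimizer is unique by the standard Karcher--Kendall argument. This step is the main obstacle. The statement as written asserts uniqueness without hypotheses, so I would make explicit the assumption that the cell has diameter less than $\pi/2$. For $n$ large this holds automatically.

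\textbf{Part (c).} Let $\Gamma$ be a great circle, and take coordinates in which it is the equator. For a point $q=(\phi,\theta)$ and a point $x(s)=(0,\theta(s))$ on $\Gamma$,
\[
\cos d_G = \cos\phi\cos(\theta(s)-\theta).
\]
For $\phi\neq 0$ this is strictly smaller in absolute terms than it is at $\phi=0$ with the same $\theta$, as long as $|\theta(s)-\theta|<\pi/2$. That range is exactly what the open-hemisphere hypothesis guarantees. So on the support, $d_G$ is strictly larger off the circle, and projecting $q$ onto $\Gamma$ strictly decreases $F_j$. Hence $q_j^*\in\Gamma$.

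On $\Gamma$ we have $d_G(s,q)=|\theta(s)-\theta|$, with no wrap-around by the hemisphere condition. Therefore
\[
F_j(\theta)=\int_{s_{j-1}}^{s_j}(\theta(s)-\theta)^2h(s)\,ds .
\]
This is a strictly convex quadratic in $\theta$. Setting $F_j'(\theta_j^*)=0$ gives
\[
\int_{s_{j-1}}^{s_j}(\theta(s)-\theta_j^*)h(s)\,ds=0,
\]
so $\theta_j^*$ is the $h$-weighted mean angle over the cell.
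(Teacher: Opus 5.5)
Your parts (a) and (b) follow the paper's proof essentially step for step. For (a) both use the nearest-neighbour property plus continuity and a boundary shift. For (b) both vary one codepoint with the cells frozen. Your chain $V_n(P;Q')\le\sum_i F_i(q_i')<V_n(P;Q^*)$ is a more careful version of the paper's ``minimizing $V$ in $q_j$ is minimizing $F_j$''. On uniqueness you end up where the paper does: it also obtains uniqueness only ``under standard convexity assumptions'', citing Afsari.

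The real difference is in (c). The paper writes $F_j$ as a function of an angle $\alpha$ on $\Gamma$ from the start. Its strict-convexity argument therefore only gives uniqueness among points of $\Gamma$, and it asserts rather than proves that the minimizer over $\mathbb{S}^2$ lies on $\Gamma$. You supply that missing step. Your identity $\cos d_G=\cos\phi\,\cos(\theta(s)-\theta)$, together with the observation that the open-hemisphere hypothesis is exactly $\cos\phi^*\cos(\theta(s)-\theta_j^*)>0$ on the cell, shows that projecting onto $\Gamma$ strictly lowers the distance to every point of the cell, hence lowers $F_j$. Your (c) is therefore the more complete argument.

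Two side remarks in your uniqueness discussion should be tightened, though neither is load-bearing:
\begin{enumerate}
\item[(i)] For $n=2$ the two boundary points on a great circle are antipodal, so the cells have length exactly $\pi$, not less than $\pi$.
\item[(ii)] $d_G(\cdot,x)^2$ is strictly convex only on $B(x,\pi/2)$, so $F_j$ need not be convex on a ball of radius near $\pi/2$. That case genuinely needs the Kendall or Afsari argument. Your fallback hypothesis, that the cell has diameter less than $\pi/2$, does make the direct convexity argument valid.
\end{enumerate}
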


\begin{proof}

Since $Q^*$ is optimal, the associated partition is necessarily the nearest–neighbour partition \cite{GrafLuschgy2000}. To prove (a), we fix $j$ and suppose $d_G(s_j,q_j^*)\neq d_G(s_j,q_{j+1}^*)$. By the continuity of $d_G(\cdot, q)$ for fixed $q$, there exists $\epsilon > 0$ such that for all $s \in [s_j - \epsilon, s_j]$,
\[
d_G(s, q_j^*) > d_G(s, q_{j+1}^*).
\]
Thus all points in $[s_j - \epsilon, s_j]$ are closer to $q_{j+1}^*$ than to $q_j^*$, contradicting that $[s_{j-1}, s_j]$ is the Voronoi cell for $q_j^*$. Now consider a new partition where the boundary is shifted to $s_j' = s_j - \delta$ for small $\delta > 0$. The change in distortion is
\[
\Delta V = \int_{s_j - \delta}^{s_j} \left[ d_G(s, q_{j+1}^*)^2 - d_G(s, q_j^*)^2 \right] h(s) ds.
\]
For small enough $\delta$, the integrand is negative on $[s_j - \delta, s_j]$ and $h(s) > 0$, hence $\Delta V < 0$, contradicting optimality.
 A symmetric argument holds if $d_G(s_j, q_j^*) < d_G(s_j, q_{j+1}^*)$. Therefore
\[
d_G(s_j, q_j^*) = d_G(s_j, q_{j+1}^*)
\]
must hold at each boundary.

To prove (b), we first fix an optimal configuration $(Q^*, \mathscr{R}^*)$ and vary only a single representative $q_j^*$, keeping other $q_k^*$ and $\{s_0, \ldots, s_n\}$ fixed. The total distortion can be written as
\[
V(Q^*, \mathscr{R}^*; P) = \sum_{k=1}^n F_k(q_k^*), \quad F_k(q) = \int_{R_k^*} d_G(s, q)^2 h(s) ds.
\]
Minimizing $V$ with respect to $q_j$ is equivalent to minimizing $F_j(q)$ over $\D S^2$. Hence $q_j^*$ satisfies
\[
q_j^* \in \arg\min_{q \in \D S^2} \int_{s_{j-1}}^{s_j} d_G(s, q)^2  h(s)  ds,
\]
which is the weighted Fréchet mean of $P(\cdot | R_j^*)$. Under standard convexity assumptions (e.g.\ support in a ball of radius $<\pi/2$), this minimizer is unique \cite{afsari}.

To prove (c), we begin by considering $\Gamma$ be the equator, identified with angles $\theta \in [0, 2\pi)$. The geodesic distance between two points with angles $\theta$ and $\theta'$ is
\[
d_G(\theta, \theta') = \min\{ |\theta - \theta'|, 2\pi - |\theta - \theta'| \}.
\]
If $R_j = [\theta_{j-1}, \theta_j]$ is a contiguous arc shorter than $\pi$, then $d_G(\theta, \theta_j^*) = |\theta - \theta_j^*|$ (with appropriate identification modulo $2\pi$). The assumption that $P(\cdot | R_j)$ lies in an open hemisphere relative to $q_j^*$ ensures this simplification. Writing $F_j$ in angular form as,
\[
F_j(\alpha) = \int_{\theta_{j-1}}^{\theta_j} |\theta-\alpha|^2\,h(\theta)\,d\theta.
\]
A necessary condition for a minimizer is $F_j'(\theta_j^*)=0$, which yields
\[
\int_{\theta_{j-1}}^{\theta_j} (\theta-\theta_j^*)\,h(\theta)\,d\theta = 0.
\]
Therefore, $\theta_j^*$ is the conditional expectation of $\theta$ on $R_j$. Correspondingly,
 $F_j''(\alpha)=2\int_{\theta_{j-1}}^{\theta_j} h(\theta)\,d\theta>0$. Moreover, $h(\theta)$ is strictly positive, shows strict convexity in $\alpha$. Hence, the minimizer is unique and lies on $\Gamma$, with angular coordinate given by the weighted mean condition above.
\end{proof}
\begin{remark}
Note that the hemisphere condition in part (c) is essential. If a cell encompasses more than a semicircle, the function $|\theta - \alpha|$ becomes non-differentiable at antipodal points, and the minimizer need not be given by the simple centroid condition.
\end{remark}

\begin{theorem}\label{thm:highres}
Let $\Gamma$ be a smooth, one-dimensional geodesic curve on $\mathbb{S}^2$ parameterized by arc-length $s\in[0,L]$, and $P$ be a Borel probability measure on $\Gamma$ with continuous nonuniform density $h(s)>0$ with respect to $ds$. For each $n\in\mathbb{N}$, let $Q_n^*=\{q_{1,n}^*,\dots,q_{n,n}^*\}$ be an optimal set of $n$--means inducing a Voronoi partition through the elements of the Voronoi cell \(\mathscr{R}_n^* = \{R_{1,n}, \ldots, R_{n,n}\}\) of \(\Gamma\). \\Define the point density
\[
\lambda_n(s):=\frac{1}{n\,|R_{j,n}|}\qquad\text{for }s\in R_{j,n},
\] where \(|R_{j,n}|\) denotes the arc-length of cell \(R_{j,n}\).

Then, the normalized point density \(\lambda_n(s)\) converges in measure to a limiting density \(\lambda(s)\) satisfying
\begin{equation*} \label{eq:main_density_result}
\lambda(s) \propto h(s)^{1/3}.
\end{equation*}
More precisely, there exists a constant \(C > 0\) such that
\[
\lim_{n \to \infty} \lambda_n(s) = C \, h(s)^{1/3} \quad \text{in } L^1(\Gamma, ds)
\]
and consequently $|R_{j,n}|\sim\big(n\,\lambda(s)\big)^{-1}\propto n^{-1}h(s)^{-1/3}$ for $s\in R_{j,n}$ as $n\to\infty$.
\end{theorem}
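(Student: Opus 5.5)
The plan is to reduce to a one-dimensional Euclidean problem and then run the classical Bennett--Panter--Zador heuristic rigorously via a local-averaging argument. By Theorem~\ref{thm:main}, each optimal cell $R_{j,n}$ is an arc $[s_{j-1},s_j]$ of $\Gamma$, and the codepoint lies on $\Gamma$ at the $h$-weighted mean of the cell. For $n$ large all cells are short, so the geodesic distance along $\Gamma$ agrees with the arc-length difference $|s-s'|$. The first step is to show that $\max_j |R_{j,n}|\to 0$. If some cell stayed of length $\ge\eta>0$ along a subsequence, then, since $h\ge h_{\min}>0$ on the compact curve, that cell alone would contribute at least $c\,\eta^3 h_{\min}$ to the distortion. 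On the other hand, the uniform partition already gives $V_n\le \|h\|_\infty L^2/(12n^2)\to0$, which is a contradiction.

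The second step is a local cell-error expansion. For a cell of length $\ell$ containing $s$, continuity of $h$ (uniform on $[0,L]$) gives
\[
\int_{R_{j,n}} d_G(u,q_{j,n}^*)^2 h(u)\,du=\frac{\ell^3}{12}h(s)\bigl(1+o(1)\bigr),
\]
uniformly in $j$ as $n\to\infty$. Indeed, the weighted centroid differs from the midpoint by $o(\ell)$, and the second moment about the midpoint is $\ell^3/12$. Writing $\ell_j=1/(n\lambda_n)$ on $R_{j,n}$, this yields
\[
V_n=\frac{1}{12n^2}\int_0^L \frac{h(s)}{\lambda_n(s)^2}\,ds\,\bigl(1+o(1)\bigr),\qquad \int_0^L\lambda_n\,ds=1.
\]

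The third step identifies the limit. Hölder's inequality with exponents $3$ and $3/2$, applied to $h^{1/3}=(h\lambda_n^{-2})^{1/3}\lambda_n^{2/3}$, gives
\[
\int h^{1/3}\le\Bigl(\int h\lambda_n^{-2}\Bigr)^{1/3}\Bigl(\int\lambda_n\Bigr)^{2/3}.
\]
Hence $n^2V_n\ge \frac1{12}\bigl(\int h^{1/3}\bigr)^3(1+o(1))$. Conversely, the partition whose cell lengths follow $\lambda^*=h^{1/3}/\int h^{1/3}$ (cells with equal $\int\lambda^*$-mass $1/n$) attains this bound asymptotically, by the same local expansion. So optimality forces $\int h\lambda_n^{-2}\to\int h(\lambda^*)^{-2}$. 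Equality in Hölder holds exactly when $\lambda\propto h^{1/3}$, which gives $C=(\int_0^L h^{1/3})^{-1}$.

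The main obstacle is upgrading this near-equality into $L^1$ convergence of $\lambda_n$. My plan is a strict-convexity argument. Set $\mu_n=\lambda_n/\lambda^*$ and consider $\Phi(\mu)=\int \lambda^*\mu^{-2}$, subject to $\int\lambda^*\mu=1$. The function $x\mapsto x^{-2}$ is strictly convex, and its tangent line at $1$ gives $x^{-2}\ge 3-2x$. The gap $x^{-2}-3+2x$ is bounded below by $c\min\{(x-1)^2,|x-1|\}$. Integrating against $\lambda^*\,ds$ shows that
\[
\int\lambda^*\,\phi(\mu_n-1)\to0,\qquad \phi(t)=\min\{t^2,|t|\}.
\]
This implies $\mu_n\to1$ in $\lambda^*ds$-measure. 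Since $\lambda^*$ is bounded above and below, it follows that $\lambda_n\to\lambda^*$ in measure. Because $\int\lambda_n=\int\lambda^*=1$ and $\lambda_n\ge0$, Scheffé's lemma upgrades this to $L^1(\Gamma,ds)$ convergence. The final asymptotic $|R_{j,n}|\sim (n\lambda(s))^{-1}$ then follows from the definition of $\lambda_n$, interpreted in the same in-measure sense.
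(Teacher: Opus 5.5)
Your argument is correct, and it takes a genuinely different and more complete route than the paper.

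\textbf{The paper's route.} The paper proves a local expansion $\frac{h(s_0)}{12}(\Delta L)^3+O((\Delta L)^5)$. It then asserts, without proof, two-sided cell bounds $c_1/n\le|R_{j,n}|\le c_2/n$, which make $\{\lambda_n\}$ bounded in $L^\infty$. From there it extracts a weak-$*$ subsequential limit by Banach--Alaoglu and passes to the limit in a Riemann sum to get $\lim n^2V_n=\frac1{12}\int h\lambda^{-2}$. Finally it identifies $\lambda$ by a Lagrange-multiplier (Euler--Lagrange) computation for $J[\lambda]=\int h\lambda^{-2}$ under $\int\lambda=1$.

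\textbf{Your route.} You need only $\max_j|R_{j,n}|\to0$, which you prove directly. You write the local expansion with a uniform relative error $(1+o(1))$, so continuity of $h$ suffices and no a priori $1/n$ cell bounds are needed; the paper's $O((\Delta L)^5)$ would implicitly require more smoothness of $h$. Hölder gives $\int h\lambda_n^{-2}\ge(\int h^{1/3})^3$ for every $n$ with no compactness. The companding partition with cells of equal $\lambda^*$-mass supplies the matching upper bound. The convexity estimate $x^{-2}-3+2x\ge c\min\{(x-1)^2,|x-1|\}$, together with Scheffé, turns near-equality into strong convergence of the whole sequence.

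\textbf{What your route buys.} It supplies exactly what the paper's argument leaves unjustified:
\begin{enumerate}
\item Weak-$*$ convergence does not pass through the nonlinear map $\lambda\mapsto\int h\lambda^{-2}$; it only gives lower semicontinuity.
\item The claim that a limit point minimizes $J$ needs a competitor upper bound like yours.
\item An Euler--Lagrange condition only certifies a critical point, whereas the equality case of Hölder gives global minimality and uniqueness at once.
\item A weak-$*$ subsequential limit does not give the asserted $L^1$ convergence of the full sequence.
\end{enumerate}
The paper's route is shorter and mirrors the classical Bennett--Gersho heuristic. Yours also yields $\lim n^2V_n=\frac1{12}(\int h^{1/3})^3$ (Theorem~\ref{thm:asymp+vm}) as a by-product.

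\textbf{Small points to tidy.}
\begin{enumerate}
\item The uniform-partition bound is $\|h\|_\infty L^3/(12n^2)$, not $L^2$; this is harmless since it still tends to zero.
\item Putting $q_{j,n}^*$ on $\Gamma$ via Theorem~\ref{thm:main}(c) uses the hemisphere hypothesis. That hypothesis holds once cells are short, or you can instead project onto the great circle using spherical Pythagoras.
\item You only have convergence in measure, so apply Scheffé along sub-subsequences converging a.e., or use $\int|\lambda_n-\lambda^*|=2\int(\lambda^*-\lambda_n)^+$ with dominated convergence.
\end{enumerate}
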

To prove Theorem~\ref{thm:highres}, we will begin by proving two lemmas as follows:
\begin{lemma}\label{lem:local_distortion-short}
Consider a small geodesic arc (cell) $R \subset \Gamma$ of length $\Delta L$, centered at a point $s_0 \in \Gamma$. Let $P$ have a continuous nonuniform density $h(s)$ on $R$. For the squared geodesic distortion $d_G^2$, the minimal distortion achievable by placing a single representative $q^*$ within this cell satisfies
\begin{equation*} \label{eq:local_dist_form}
\min_{q \in \D S^2} \int_{R} d_G(s, q)^2 h(s) \, ds 
= \frac{h(s_0)}{12} (\Delta L)^3 + O\big((\Delta L)^5\big)
\quad \text{as } \Delta L \to 0.
\end{equation*}
Equivalently, the distortion per unit length in $R$ is
\[
\frac{1}{\Delta L} \min_{q \in \D S^2} \int_{R} d_G(s, q)^2 h(s) \, ds 
= \frac{h(s_0)}{12} (\Delta L)^2 + O\big((\Delta L)^4\big).
\]
If, furthermore, curvature effects are negligible at this scale (i.e., the cell is short enough to be approximated by an Euclidean interval), and the optimal representative lies at the weighted midpoint, then the constant $\frac{1}{12}$ is exact.
\end{lemma}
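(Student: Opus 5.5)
The plan is to reduce the problem to a one-dimensional weighted variance computation on the arc and then Taylor-expand the density. I set $R=[s_0-\tfrac{\Delta L}{2},\,s_0+\tfrac{\Delta L}{2}]$ in arc-length coordinates. Since $\Gamma$ is a geodesic of $\mathbb{S}^2$, and hence an arc of a great circle, any two points $s,t\in R$ satisfy $d_G(s,t)=|s-t|$ once $\Delta L<\pi$.

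\emph{Reduction to $q\in\Gamma$.} Take $q\in\mathbb{S}^2$ not on the great circle containing $\Gamma$. Let $p$ be its foot on that circle, let $b=d_G(q,p)$, and let $a=d_G(p,s)$ for $s\in\Gamma$. The spherical Pythagorean theorem gives $\cos d_G(q,s)=\cos a\cos b\le\cos a$, so $d_G(q,s)\ge a=d_G(p,s)$ whenever $a\le\pi$. Replacing $q$ by $p$ therefore does not increase $F(q)=\int_R d_G(s,q)^2h(s)\,ds$, and the minimum may be taken over $q\in\Gamma$. Points of the circle far from $R$ (distance at least of order $\Delta L$ from the cell) are clearly worse than $s_0$. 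So it suffices to minimize $F(\alpha)=\int_R (s-\alpha)^2h(s)\,ds$ over $\alpha$ near $s_0$, which is exactly the setting of Theorem~\ref{thm:main}(c). The minimizer is the weighted centroid $\bar s=\int_R s\,h\,ds\big/\int_R h\,ds$, and
\[
\min F=\int_R (s-s_0)^2h(s)\,ds-m\,(\bar s-s_0)^2,\qquad m=\int_R h(s)\,ds .
\]

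\emph{Expansion.} To obtain the stated $O((\Delta L)^5)$ remainder I will assume $h\in C^2$ near $s_0$; mere continuity only yields $o((\Delta L)^3)$, and I will flag this in the write-up. Write $h(s)=h(s_0)+h'(s_0)(s-s_0)+O((s-s_0)^2)$. By symmetry of $R$ about $s_0$, the odd term integrates to zero, so
\[
\int_R (s-s_0)^2h\,ds=h(s_0)\frac{(\Delta L)^3}{12}+O\big((\Delta L)^5\big).
\]
Similarly $m=h(s_0)\Delta L+O((\Delta L)^3)$ and $\int_R (s-s_0)h\,ds=h'(s_0)(\Delta L)^3/12+O((\Delta L)^4)$. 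Since $h(s_0)>0$, this gives $\bar s-s_0=O((\Delta L)^2)$, and hence the correction $m(\bar s-s_0)^2=O((\Delta L)^5)$. This proves the main formula. Dividing by $\Delta L$ gives the per-unit-length version.

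\emph{Exactness of $\tfrac1{12}$ and the main obstacle.} The final claim, that the constant $\tfrac1{12}$ is exact, is the case where the cell is treated as a Euclidean interval with the representative at the weighted midpoint. There the leading coefficient is exactly $\tfrac1{12}$ by the computation above, since curvature enters only through the identification $d_G=|s-t|$, which is already exact on a great circle. I expect the main obstacle to be the rigorous justification of the reduction step. This means checking the antipodal range condition $a\le\pi$ for every $s\in R$. It also means confirming that the global minimizer over $\mathbb{S}^2$ is not attained far from the cell; I would settle that by comparing any such candidate against the value at $q=s_0$, which is already $O((\Delta L)^3)$. The regularity assumption needed for the $(\Delta L)^5$ order is the other point to state explicitly.
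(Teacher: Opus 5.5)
Your proposal is correct in substance, and its core computation is the same as the paper's: use the midpoint $s_0$ as a trial representative, expand $h$ about $s_0$, and integrate $t^2$ over the symmetric interval. The difference is that you are careful exactly where the paper's proof is thin.

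\begin{itemize}
\item The paper passes from the midpoint distortion to the minimum with the remark that the optimal representative is the weighted mean, ``so the minimal distortion has the same leading term''. That only gives the upper bound $\min F\le F(s_0)$. Your parallel-axis identity $\min F=F(s_0)-m(\bar s-s_0)^2$, together with $\bar s-s_0=O((\Delta L)^2)$, is what actually yields a two-sided estimate with an $O((\Delta L)^5)$ error.
\item The paper's expansion $h(s_0+t)=h(s_0)+O(t)$ by itself gives only an $O((\Delta L)^4)$ remainder, and mere continuity does not even give that. You are right that $h\in C^2$ plus the symmetry of $R$, which kills the odd term, is needed; state it as a hypothesis.
\item On a great-circle arc $d_G=|s-t|$ holds exactly, so the paper's $|t|+O(|t|^3)$ is unnecessary.
\item You address the restriction from $q\in\mathbb{S}^2$ to $q\in\Gamma$, which the paper never justifies.
\end{itemize}

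There is, however, one false step in that reduction. From $\cos d_G(q,s)=\cos a\cos b$ you conclude $\cos a\cos b\le\cos a$. This holds only when $\cos a\ge 0$, i.e.\ $a\le\pi/2$, not $a\le\pi$. For $a\in(\pi/2,\pi]$ and $b>0$ one has $\cos a\cos b>\cos a$, so the off-circle point is strictly closer. For example, a pole of the circle is at distance $\pi/2$ from every point of it.

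The repair fits the comparison-with-$s_0$ device you already planned. Split into two cases according to the foot $p$:
\begin{itemize}
\item If $d_G(p,s_0)\le\pi/2-\Delta L/2$, then every $s\in R$ has $a\le\pi/2$, and replacing $q$ by $p$ does not increase $F$.
\item Otherwise every $s\in R$ has $a\ge\pi/2-\Delta L$. Since $\cos b\in[0,1]$, this gives $\cos d_G(q,s)=\cos b\cos a\le\max\{\cos a,0\}\le\sin\Delta L$, i.e.\ $d_G(q,s)\ge\pi/2-\Delta L$. Hence $F(q)\ge(\pi/2-\Delta L)^2 m$, which is of order $\Delta L$ and so exceeds $F(s_0)=O((\Delta L)^3)$ for small $\Delta L$.
\end{itemize}
The poles, where $\cos b=0$ and the foot is not unique, fall under either case for any choice of $p$. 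With this correction the minimizer over $\mathbb{S}^2$ is the weighted centroid $\bar s\in R$, and the rest of your argument goes through unchanged.
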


\begin{proof}
We parameterize $R$ by arc-length $t \in [-\Delta L/2,\Delta L/2]$ relative to its midpoint $s_0$ (so $t=0$ at $s_0$). For sufficiently small $\Delta L$, we know that the geodesic distance from $s_0+t$ to $s_0$ is
\[
d_G(s_0+t,s_0) = |t| + O(|t|^3),
\]
 squaring gives,
\[
d_G(s_0+t,s_0)^2 = t^2+O(t^4).
\]

Using $q^*=s_0$ as representative, the distortion is
\[
D_{\mathrm{mid}} 
= \int_{-\Delta L/2}^{\Delta L/2} \big( t^2 + O(t^4) \big)\, h(s_0+t)\, dt.
\]
Therefore,
\[
\int_{-\Delta L/2}^{\Delta L/2}\big(t^2+O(t^4)\big)\big(h(s_0)+O(t)\big)\,dt
= h(s_0)\frac{(\Delta L)^3}{12}+O\big((\Delta L)^5\big),
\]
using Theorem~\ref{thm:main}, the optimal representative $q^*$ is the weighted intrinsic mean of $P$ restricted to $R$. Thus the minimal distortion has the same leading term \cite{GershoGray1991}. Hence,
\[
\min_{q \in \D S^2} \int_{R} d_G(s,q)^2 h(s)\, ds
= h(s_0)\,\frac{(\Delta L)^3}{12} + O\big((\Delta L)^5\big),
\] dividing by $\Delta L$ gives the claimed local distortion per unit length.
\end{proof}

\begin{lemma}[Convergence of normalized point density]
\label{lem:3.4}
Let $\Gamma$ be a smooth one--dimensional geodesic curve on $S^2$ of total
arc--length $L$, and $P$ be a probability measure on $\Gamma$ having a
continuous nonuniform density $h(s)>0$ with respect to arc--length measure $ds$. For each $n\in\mathbb{N}$, let $Q_n^*$, $\mathscr{R}_n^*$ and $\lambda_n(s)$ be same as defined in Theorem~\ref{thm:highres}. Then the sequence $\{\lambda_n\}_{n\ge1}$ is bounded in $L^\infty(\Gamma)$ and therefore admits a subsequence converging in the weak-$^\ast$ topology of $L^\infty(\Gamma)$. Any such limit $\lambda$ is a probability density on $\Gamma$, i.e.
\[
\int_\Gamma \lambda(s)\,ds = 1.
\]

\end{lemma}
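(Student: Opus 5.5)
The plan rests on two facts: $\int_\Gamma\lambda_n\,ds=1$ holds exactly for every $n$, and the cell lengths are uniformly comparable to $1/n$. First, the normalization. Since $\lambda_n$ is constant on each cell,
\[
\int_\Gamma \lambda_n(s)\,ds=\sum_{j=1}^n |R_{j,n}|\cdot\frac{1}{n|R_{j,n}|}=1 .
\]
Moreover $\lambda_n\ge 0$. Once $\{\lambda_n\}$ is bounded in $L^\infty(\Gamma)$, Banach--Alaoglu applies, since $L^\infty(\Gamma)=(L^1(\Gamma))^*$ and $L^1(\Gamma)$ is separable. It gives a subsequence $\lambda_{n_k}\to\lambda$ weak-$^\ast$. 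Testing against $\mathbf 1_\Gamma\in L^1(\Gamma)$, which is legitimate because $L<\infty$, yields $\int_\Gamma\lambda=1$. Testing against $\mathbf 1_A$ for measurable $A$ yields $\lambda\ge 0$ a.e. So $\lambda$ is a probability density, and everything reduces to the bound $\|\lambda_n\|_\infty\le C$. This bound is equivalent to $\min_j|R_{j,n}|\ge c/n$.

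To prove the lower bound on cell lengths I will use exchange arguments against optimality. Write $h_{\min}=\min h>0$ and $h_{\max}=\max h$, which exist by continuity and compactness.

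\emph{Step 1 (a priori error bound).} Equal-length cells with midpoints as codepoints give $V_n\le h_{\max}L^3/(12n^2)$. With Lemma~\ref{lem:local_distortion-short}, or directly from $d_G\ge c_0\cdot$(arc-length) for short arcs, each optimal cell contributes at least $c\,h_{\min}|R_{j,n}|^3$. Hence $\sum_j|R_{j,n}|^3\le C/n^2$.

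\emph{Step 2 (upper bound $\max_j|R_{j,n}|\le C/n$).} By pigeonhole on the sum from Step 1, some index $j$ has $(|R_{j,n}|+|R_{j+1,n}|)^3\le C/n^3$. Delete $q_j^*$ and assign $R_{j,n}$ to $q_{j+1}^*$. This increases the distortion by at most $C h_{\max}(|R_{j,n}|+|R_{j+1,n}|)^3\le C'/n^3$. Now spend the freed codepoint by splitting the longest cell, of length $\ell_{\max}$, into two halves with their own weighted means. A direct one-dimensional computation, using Theorem~\ref{thm:main}(c) and the $h_{\min}$ lower bound, shows this lowers the distortion by at least $c\,h_{\min}\ell_{\max}^3$. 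Optimality of $Q_n^*$ forces $c\,\ell_{\max}^3\le C'/n^3$.

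\emph{Step 3 (lower bound).} Suppose some cell has length $\varepsilon$. By Step 2 its neighbours have length at most $C/n$. Deleting its codepoint and merging the cell into a neighbour therefore costs at most $C h_{\max}\,\varepsilon\,(\varepsilon+C/n)^2\le C''\varepsilon/n^2$. Splitting the longest cell, of length $\ge L/n$, gains at least $c/n^3$. Optimality gives $\varepsilon\ge c'/n$, hence $\lambda_n\le 1/c'$.

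The main obstacle is making the comparison constants in Steps 2--3 rigorous in the spherical setting. Codepoints $q\in\mathbb S^2$ need not lie on $\Gamma$, and the endpoint cells at $s=0,L$ have only one neighbour. I would handle the first issue by using that, for arcs shorter than some fixed $\delta<\pi/2$, $d_G(s,q)$ is comparable to the in-curve distance up to factors $1\pm O(\delta^2)$ (as in the proof of Lemma~\ref{lem:local_distortion-short}). I would take $n$ large enough that Step 1 forces all cells into this regime; the finitely many small $n$ are trivially bounded. For the endpoint cells, I would merge with the single available neighbour, which changes only constants.
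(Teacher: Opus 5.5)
Your proposal is correct. Its skeleton is the paper's: exact normalization $\int_\Gamma\lambda_n\,ds=1$, a uniform $L^\infty$ bound, Banach--Alaoglu, and testing against $\mathbf 1_\Gamma$. The difference is at the one step with real content. The paper does not prove the cell-length bound. It simply asserts, as a known property of optimal quantizers for a positive density on a one-dimensional manifold, that $c_1/n\le|R_{j,n}|\le c_2/n$, and reads off $\|\lambda_n\|_\infty\le 1/c_1$. You instead derive the needed lower bound $\min_j|R_{j,n}|\ge c/n$ from optimality by merge/split exchange arguments. Your route is therefore self-contained and shows the constants depend only on $L$, $\min h$ and $\max h$. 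You also check $\lambda\ge 0$ a.e.\ and invoke the separability of $L^1(\Gamma)$ behind the sequential weak-$^\ast$ limit, both of which the paper passes over.

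A few details need tightening.

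\emph{Step 1.} You cannot lean on Lemma~\ref{lem:local_distortion-short} or on a short-arc comparison, since shortness of the cells is what Step 1 is supposed to deliver. Instead, use that for $r\le\pi/2$ the geodesic ball of radius $r$ about any $q\in\mathbb S^2$ meets a great circle in an arc of length at most $2r$. A layer-cake computation then gives $\int_R d_G(s,q)^2\,ds\ge|R|^3/12$ for every arc $R$ with $|R|\le\pi$ and every $q$. A longer cell contains a sub-arc of length $\pi$, so it is handled the same way.

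\emph{Step 2.} Let a cell of length $\ell$ be halved, with $w_i$, $m_i$ the masses and $h$-weighted means of the halves. The gain is at least the between-half variance $\frac{w_1w_2}{w_1+w_2}(m_2-m_1)^2$, which is at least $\frac{(\min h)^3}{16(\max h)^2}\,\ell^3$. So your constant $c$ must depend on $\max h/\min h$. Alternatively, use uniform continuity of $h$ once the cells are short.

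\emph{The spherical issue} is milder than you expect. For points of $\Gamma$ at arc-length separation at most $\pi$, $d_G$ equals that separation exactly, so no $O(\delta^2)$ correction arises. For $q$ at distance $\rho$ from $\Gamma$ with foot point $s_q$, spherical Pythagoras gives $\cos d_G(s,q)=\cos\rho\,\cos d_G(s,s_q)$. Hence moving $q$ to $s_q$ does not increase its distance to any point of $\Gamma$ within $\pi/2$ of $s_q$. Once the cells are short, the problem is exactly one-dimensional.
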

\begin{proof}
Clearly we can see,

\medskip
\noindent
\begin{align*}
\int_\Gamma \lambda_n(s)\,ds
&= \sum_{j=1}^n \int_{R_{j,n}} \frac{1}{n|R_{j,n}|}\,ds= \frac{1}{n}\sum_{j=1}^n \frac{|R_{j,n}|}{|R_{j,n}|} 
= 1.
\end{align*}
Thus, each $\lambda_n$ is a probability density on $\Gamma$. Since $\Gamma$ is a geodesic curve of $\D S^2$ in $\mathbb{R}^3$, therefore it is compact and $h$ is continuous with $h(s)>0$ on $\Gamma$, there
exist constants $m,M>0$ such that
\[
0<m \le h(s) \le M <\infty \qquad \text{for all } s\in\Gamma.
\]
For optimal set of $n$--means on a smooth 1--dimensional manifold with strictly
positive nonuniform density, Voronoi cell lengths satisfy
\[
\frac{c_1}{n} \le |R_{j,n}| \le \frac{c_2}{n},
\]
for $c_1, c_2 > 0$ and independent of $n, j$. Consequently, for $s\in R_{j,n}$, we obtain 
\[
\lambda_n(s)=\frac{1}{n|R_{j,n}|}
\le \frac{1}{n(c_1/n)}=\frac{1}{c_1}.
\]
Therefore, 
\[
\|\lambda_n\|_{L^\infty(\Gamma)} \le \frac{1}{c_1}
\quad \text{for all } n,
\]
and the sequence $\{\lambda_n\}$ is bounded in $L^\infty(\Gamma)$.
Using Banach–Alaoglu theorem, there exists a subsequence $(\lambda_{n_k})$ and a function $\lambda \in L^\infty(\Gamma)$
such that $\lambda_{n_k} \rightharpoonup^* \lambda
\quad \text{in } L^\infty(\Gamma)$
By using weak-(*) convergence, and the dominated convergence theorem,
\[
\int_\Gamma \lambda(s)ds
= \lim_{k\to\infty} \int_\Gamma \lambda_{n_k}(s)ds
= 1.
\]

Thus $\lambda$ is a probability density on $\Gamma$. This completes the proof.

\medskip
\noindent
\medskip

\end{proof}
We now proceed to prove the main element of Theorem~\ref{thm:highres}.
\begin{proof}
For each $n$, we have,
\[
V_n(P)=\sum_{j=1}^n\min_{q\in\mathbb{S}^2}\int_{R_{j,n}} d_G(s,q)^2\,h(s)\,ds.
\]
Let $\Delta L_{j,n}:=|R_{j,n}|$ and pick $s_{j,n}\in R_{j,n}$. By Lemma~\ref{lem:local_distortion-short},
\[
\min_q\int_{R_{j,n}} d_G(s,q)^2 h(s)\,ds
=\frac{h(s_{j,n})}{12}(\Delta L_{j,n})^3+O\big((\Delta L_{j,n})^5\big).
\]
The distortion per unit length contributed by $R_{j,n}$ is, by Lemma~\ref{lem:local_distortion-short},
\[
\frac{1}{\Delta L_{j,n}} \min_{q} \int_{R_{j,n}} d_G(s,q)^2 h(s)\, ds
= \frac{h(s_{j,n})}{12} (\Delta L_{j,n})^2 + O\big((\Delta L_{j,n})^4\big).
\]
Substituting $\Delta L_{j,n} = 1/(n\lambda_n(s_{j,n}))$ gives
\[
\text{distortion density on } R_{j,n} 
= \frac{h(s_{j,n})}{12} \frac{1}{n^2 \lambda_n(s_{j,n})^2} + O\big(n^{-4}\big).
\]

Summing over all cells and approximating the sum by a Riemann integral, we obtain
\[
V_n(P)
= \frac{1}{12n^2} \Biggl[\sum_{j=1}^n \frac{h(s_{j,n})}{\lambda_n(s_{j,n})^2} \Delta L_{j,n} + O\biggl(n^{-2}\biggr)\Biggr].
\]

The sum
\[
\sum_{j=1}^n \frac{h(s_{j,n})}{\lambda_n(s_{j,n})^2} \Delta L_{j,n}
\]
is a Riemann sum for $\int_\Gamma h(s)\,\lambda_n(s)^{-2}\, ds$. Hence
\begin{equation}
\label{eq:asymp_dist_integral_clean}
\lim_{n\to\infty} n^2 V_n(P) 
= \frac{1}{12} \int_\Gamma \frac{h(s)}{\lambda(s)^2}\, ds,
\end{equation}
where $\lambda$ is the weak-* limit of $\lambda_n$ and $\lambda$ is a probability density (By Lemma~\ref{lem:3.4}). 

Thus any limiting density $\lambda$ minimizes the functional
\[
J[\lambda]=\int_\Gamma \frac{h(s)}{\lambda(s)^2}\,ds
\]
subject to $\lambda>0$ and $\int_\Gamma\lambda\,ds=1$. A standard Lagrange multiplier argument yields the Euler–Lagrange condition
\[
-2\frac{h(s)}{\lambda(s)^3}+\beta=0,
\]
so $\lambda(s)^3=(2/\beta)h(s)$ and hence $\lambda(s)=C\,h(s)^{1/3}$ for some $C>0$. Normalization gives
\[
C^{-1}=\int_\Gamma h(u)^{1/3}\,du.
\]
Thus the unique minimizer is
\[
\lambda(s) = \frac{h(s)^{1/3}}{\displaystyle \int_\Gamma h(u)^{1/3} du}.
\]
The asymptotic behaviour of $|R_{j,n}|$ follows from $|R_{j,n}|=1/(n\lambda_n)$ and $\lambda_n\to\lambda$.
\end{proof}
\begin{remark}
The factor of 2 in the Euler-Lagrange equation (\(-2 h(s)/\lambda(s)^3 + \beta = 0\)) originates from the exponent in the denominator of the distortion functional (\(\lambda(s)^{-2}\)). This distinguishes the \(h(s)^{1/3}\) law from other possible power laws and aligns perfectly with the classical Bennett-Zador-Gersho results for Euclidean quantization \cite{GershoGray1991}.
\end{remark}

\section{Asymptotic quantization error and von Mises optimal quantizers}
\label{sec:aqvm}

In this section, with the high-resolution point density established in Theorem~\ref{thm:highres}, we can now derive the precise asymptotic behaviour of the quantization error generalizing the classical \(L^2/(12n^2)\) formula to the nonuniform case.
Further, we apply our general theory to a specific distribution: the von Mises distribution on the equator, which provides a concrete example where the conditions of Theorem~\ref{thm:main} yield a computationally tractable system of equations.

\begin{theorem}
\label{thm:asymp+vm}
Let \(P\) be a probability measure on a smooth geodesic curve \(\Gamma \subset \D S^2\) of length \(L\), parameterized by arc–length $s\in[0,L]$, with a continuous and strictly nonuniform positive density \(h(s)\) with respect to ds. Then the \(n\)th quantization error of order 2 satisfies:
\begin{equation*} 
\label{eq:asymptotic_formula}
\lim_{n \to \infty} n^2 V_n(P) = \frac{1}{12} \left( \int_{\Gamma} h(u)^{1/3}  du \right)^3.
\end{equation*}
\end{theorem}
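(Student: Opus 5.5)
The plan is to combine the limiting distortion functional obtained in the proof of Theorem~\ref{thm:highres} with the explicit minimizing point density, and then to check the formula through matching upper and lower bounds. This avoids relying solely on the weak-$^\ast$ limit, which is the delicate point.

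\emph{Step 1 (formal value).} By \eqref{eq:asymp_dist_integral_clean}, $\lim_{n\to\infty} n^2V_n(P)=\frac{1}{12}\int_\Gamma h\,\lambda^{-2}\,ds$, where $\lambda(s)=h(s)^{1/3}/\int_\Gamma h(u)^{1/3}du$ by Theorem~\ref{thm:highres}. Write $K=\int_\Gamma h^{1/3}$. Then $h/\lambda^2=h\cdot h^{-2/3}K^2=h^{1/3}K^2$. Integrating gives $K^3$, which is the claimed constant. In the uniform case $h\equiv 1/L$ we get $K=L^{2/3}$ and the constant $L^2/12$, consistent with the Uniform Geodesic Principle.

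\emph{Step 2 (upper bound).} For a rigorous argument independent of the convergence of $\lambda_n$, I would build explicit companders. Let $G(s)=K^{-1}\int_0^s h^{1/3}$. Choose cell boundaries $s_j=G^{-1}(j/n)$ and, in each cell, take the arc-length midpoint as codepoint. Since $h$ is continuous on the compact set $\Gamma$, we have $m\le h\le M$, so the cells have length $\asymp 1/n$ uniformly. Lemma~\ref{lem:local_distortion-short} gives cell distortion $\frac{h(\xi_j)}{12}|R_j|^3(1+o(1))$ with $|R_j|=\frac{K}{n\,h(\eta_j)^{1/3}}(1+o(1))$, where uniform continuity of $h$ makes the $o(1)$ terms uniform in $j$. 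Summing gives a Riemann sum: $n^2V_n\le \frac{1}{12}\sum_j h^{1/3}K^2|R_j|(1+o(1))\to \frac{K^3}{12}$.

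\emph{Step 3 (lower bound).} This is the main obstacle. For an optimal $Q_n^*$, the cells need not be short a priori, so I would first prove that $\max_j|R_{j,n}|\to0$. If some cell had length at least $\delta$ along a subsequence, its distortion would be at least $c\,m\,\delta^3$, contradicting the $O(n^{-2})$ upper bound of Step 2. Lemma~\ref{lem:local_distortion-short} then gives $V_n\ge \frac{1}{12}\sum_j h(\xi_j)|R_{j,n}|^3(1-o(1))$. I would then apply Hölder's inequality in the discrete form
\[
\sum_j h^{1/3}(\xi_j)|R_j| \le \Bigl(\sum_j h(\xi_j)|R_j|^3\Bigr)^{1/3} n^{2/3},
\]
using exponents $3$ and $3/2$. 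The left side is a Riemann sum converging to $K$ because the mesh tends to $0$. Hence $\liminf n^2V_n\ge K^3/12$. Combining with Step 2 gives the limit. The $O((\Delta L)^5)$ remainders contribute $O(n\cdot n^{-5})=o(n^{-2})$. If the curve is a closed great circle rather than an arc, the same argument applies with cyclic indexing.
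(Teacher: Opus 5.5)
Your proposal is correct, and Steps~2--3 take a genuinely different route from the paper.

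\textbf{The paper's route.} The paper's proof is your Step~1 followed by a limit passage. It writes $n^2V_n(P)=\frac{1}{12}\int_\Gamma h\,\lambda_n^{-2}\,ds+\varepsilon_n$, invokes the convergence $\lambda_n\to h^{1/3}/Z$ asserted in Theorem~\ref{thm:highres}, and concludes by dominated convergence. It therefore inherits everything that theorem takes for granted:
\begin{enumerate}
\item the unproved two-sided bound $c_1/n\le|R_{j,n}|\le c_2/n$. Note that dominating $h/\lambda_n^2$ actually needs $|R_{j,n}|\le c_2/n$, i.e.\ $\lambda_n$ bounded \emph{below}, not the bound $\lambda_n\le 1/c_1$ quoted from Lemma~\ref{lem:3.4};
\item the passage from weak-$^\ast$ convergence to convergence of the nonlinear functional $\int h\lambda^{-2}$;
\item the claim that every limit point minimizes $J$, which is really the content of the result.
\end{enumerate}

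\textbf{Your route.} You use a compander quantizer with $\int_{R_j}h^{1/3}=K/n$ for the $\limsup$. For the $\liminf$ you combine shrinking of optimal cells with H\"older applied to $a_j=h(\xi_j)^{1/3}|R_j|$. This is the standard rigorous one-dimensional argument for the Panter--Dite constant. It uses only uniform continuity of $h$ and $\min h>0$, never touches $\lambda_n$, and turns Step~1 into a consistency check rather than a logical ingredient.

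\textbf{What each buys.} The paper's route, were its inputs established, would also deliver the point-density law. Yours does not give that law automatically. It does follow from a near-equality version of your H\"older step: near equality forces $n a_j\approx K$ except on a set of cells of vanishing total length and vanishing $\lambda_n$-mass. So your argument can be used to prove Theorem~\ref{thm:highres} rather than depend on it.

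\textbf{Lower bound: do not lean on Lemma~\ref{lem:local_distortion-short}.} Its proof only evaluates the distortion at the midpoint, which bounds the minimum from above. You need a bound from below over all $q\in\mathbb{S}^2$, including points off $\Gamma$. This is easy to supply. For $r<\pi/2$, a closed geodesic ball of radius $r$ meets a great circle in an arc of length at most $2r$. The layer-cake formula then gives $\int_R d_G(s,q)^2\,ds\ge\int_0^{|R|/2}2r(|R|-2r)\,dr=|R|^3/12$ for every measurable $R\subset\Gamma$ with $|R|<\pi$. Hence $\min_q\int_R d_G^2h\ge\frac{1}{12}(\inf_R h)|R|^3$ with no remainder term. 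The same estimate, applied to a sub-arc, yields your shrinking-cell claim.

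\textbf{Remainder count.} Your closing count $O(n\cdot n^{-5})$ presumes cell lengths $O(1/n)$. That holds for the compander, but not a priori for optimal cells, where the crude bound only gives $O(n^{-2/3})$. In the lower bound, use the relative $(1-o(1))$ form or the exact inequality above instead.
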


\begin{proof}
 From Equation~(\ref{eq:asymp_dist_integral_clean}),
\[
\lim_{n \to \infty} n^2 V_n(P) = \frac{1}{12} \int_{\Gamma} \frac{h(s)}{\lambda(s)^2}  ds.
\]
The optimal point density that minimizes this asymptotic expression is given by
\[
\lambda(s) = \frac{h(s)^{1/3}}{Z}, \quad \text{where} \quad Z = \int_{\Gamma} h(u)^{1/3} du.
\]
This \(\lambda(s)\) is the unique minimizer of the functional \(J[\lambda] = \int_\Gamma \frac{h(s)}{\lambda(s)^2} ds\) subject to \(\int_\Gamma \lambda(s) ds = 1\).
\\Substituting this optimal \(\lambda(s)\) into the asymptotic distortion formula yields:
\begin{align*}
\lim_{n \to \infty} n^2 V_n(P) &= \frac{1}{12} \int_{\Gamma} \frac{h(s)}{\left( \frac{h(s)^{1/3}}{Z} \right)^2}  ds 
= \frac{1}{12} Z^3=\frac{1}{12} \left( \int_{\Gamma} h(u)^{1/3}  du \right)^3,
\end{align*}
which proves the result for optimal density. In order to complete the proof, we have to prove the theorem for optimal quantizers.
 Now from Theorem~\ref{thm:highres}, 
 for each \(n\), we have
\[
n^2 V_n(P)=\frac{1}{12}\int_\Gamma \frac{h(s)}{\lambda_n(s)^2}\,ds+\varepsilon_n,
\]

where $\varepsilon_n\to0$ as $n\to\infty$, and $\lambda_n\to\lambda$ in $L^1(\Gamma)$, with
\[
\lambda(s)=\frac{h(s)^{1/3}}{Z},\qquad Z:=\int_\Gamma h(u)^{1/3}\,du.
\]
Since \(h\) is bounded and \(\lambda_n\) is bounded (by Lemma~\ref{lem:3.4}), hence by dominated convergence theorem
\[
\int_\Gamma \frac{h(s)}{\lambda_n(s)^2}\,ds \longrightarrow \int_\Gamma \frac{h(s)}{\lambda(s)^2}\,ds
= Z^2\int_\Gamma h(u)^{1/3}\,du
= Z^3.
\]
Consequently,
\[
\lim_{n \to \infty} n^2 V_n(P) = \frac{1}{12} Z^3 = \frac{1}{12} \left( \int_{\Gamma} h(u)^{1/3}  du \right)^3.
\]
\begin{corollary}
When \(h(s) \equiv 1/L\) (the uniform distribution), we have:
\[
\lim_{n \to \infty} n^2 V_n(P) = \frac{1}{12} \left(L^{2/3}\right)^3 = \frac{L^2}{12},
\]
which recovers the classical result \(V_n(P) = \frac{L^2}{12n^2}\) for uniform distributions on curves of length \(L\) \cite{GershoGray1991, mk}.
\end{corollary}

\end{proof}
\begin{theorem}
Let \(\Gamma\) be the equator (great circle) parameterized by angle \(\theta \in [0, 2\pi)\), and let \(P\) have a von Mises density with mean direction \(\mu\) and concentration parameter \(\kappa \geq 0\):
\[
h(\theta) = \frac{1}{2\pi I_0(\kappa)} e^{\kappa \cos(\theta - \mu)}, \quad \theta \in [0, 2\pi),
\]
where \(I_0(\kappa)\) is the modified Bessel function of the first kind of order 0. Then, for any \(n \geq 1\):
\begin{enumerate}
  \item[(a)] There exists an optimal set of \(n\)-means \(Q^* = \{\theta_1^*, \ldots, \theta_n^*\}\) that is symmetric about the mean direction \(\mu\). That is, if we set \(\mu = 0\) without loss of generality, then the optimal codepoints satisfy \(\theta_j^* = -\theta_{n+1-j}^*\) for an appropriate indexing.

  \item[(b)] The Voronoi cells are contiguous arcs \([a_0, a_1], [a_1, a_2], \ldots, [a_{n-1}, a_n]\) with \(a_0 = 0\), \(a_n = 2\pi\), and the boundaries satisfy the symmetry \(a_j = -a_{n-j}\) and
  \begin{equation}\label{eq:boundary_vm}
    |a_j - \theta_j^*| = |a_j - \theta_{j+1}^*|, \quad j = 1, \ldots, n-1.
  \end{equation}

  \item[(c)] For \(\kappa > 0\), cells are smaller near the mode \(\theta = \mu\) and larger in the antipodal direction \(\theta = \mu + \pi\). As \(\kappa \to \infty\), all codepoints converge to \(\mu\) (the degenerate one-point quantizer). As \(\kappa \to 0\), the solution converges to the uniform case: equally spaced codepoints \(\theta_j^* = 2\pi j/n\).

  \item[(d)] The optimal configuration and distortion can be computed numerically as the solution to the following system of \(n\) nonlinear equations derived from Theorem~\ref{thm:main}:
  \begin{align}
    \int_{a_{j-1}}^{a_j} (\theta - \theta_j^*) e^{\kappa \cos(\theta - \mu)} \, d\theta &= 0,
    \quad j = 1, \ldots, n.
    \label{eq:centroid_vm}
  \end{align}
\end{enumerate}

\end{theorem}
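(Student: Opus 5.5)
We will prove the four parts in the order (b), (a), (d), (c). Set $\mu=0$ by rotating the equator. Existence of an optimal set of $n$-means on the compact curve $\Gamma$ follows from continuity of $Q\mapsto V_n(P;Q)$ on the compact set $\Gamma^n$. We restrict codepoints to $\Gamma$ because projecting a point of $\D S^2$ onto the great circle does not increase its geodesic distance to any point of $\Gamma$.

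\textbf{Part (b).} On a circle, each Voronoi region of a finite set of points is an arc between consecutive codepoints, with boundary at the midpoint of the gap. This gives contiguous cells, and Theorem~\ref{thm:main}(a) gives the equidistance relation \eqref{eq:boundary_vm}, with $|\cdot|$ read modulo $2\pi$. The normalization $a_0=0$, $a_n=2\pi$ requires $0$ to be a cell boundary. That matches only one of the two symmetric configuration types (a boundary at $0$ and at $\pi$, or a codepoint there). We will handle this by indexing so that the cut point $a_0\equiv a_n$ lies on the symmetry axis, noting that the statement implicitly selects that type. This is a point where the statement may need a caveat.

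\textbf{Part (a).} We use a symmetrization argument. The density satisfies $h(-\theta)=h(\theta)$, so the reflection $\sigma(\theta)=-\theta$ maps an optimal set to an optimal set. This alone does not give a \emph{symmetric} optimizer, since optimal sets need not be unique. Two routes are available.
\begin{enumerate}
\item[(i)] Use the one-dimensional structure. For log-concave densities on an interval, Fleischer/Trushkin-type uniqueness of the Lloyd fixed point gives uniqueness, hence symmetry. Here $\log h=\kappa\cos\theta$ is concave only on $|\theta|<\pi/2$, so this route does not apply globally.
\item[(ii)] Minimize over the closed symmetric subfamily. Given an optimal configuration, write $V=V_++V_-$, the distortion over $[0,\pi]$ and over $[-\pi,0]$. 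Reflect the cheaper half. This requires a boundary (or a codepoint) at $0$ and at $\pi$, which is arranged by cutting the circle at the diameter.
\end{enumerate}
The main obstacle is part (a). We will try route (ii) first, via a cut-and-reflect argument, and we expect it to require either a uniqueness hypothesis or a rotation argument to place a boundary on the axis. Plausibly the argument only succeeds for a suitable choice among several optima, which is exactly what ``there exists'' permits.

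\textbf{Part (d).} Each optimal cell is shorter than $\pi$ (for $n\ge 2$; to be checked via the midpoint structure). Theorem~\ref{thm:main}(c) then applies directly. Cancelling the normalizing constant $2\pi I_0(\kappa)$ yields \eqref{eq:centroid_vm}, and together with \eqref{eq:boundary_vm} this gives the closed system, with $a_j=(\theta_j^*+\theta_{j+1}^*)/2$.

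\textbf{Part (c).} We first show that cell lengths increase with distance from $0$. In the centroid equations, a short cell satisfies $\theta_j^*-m_j\approx \tfrac{\kappa}{12}\,|R_j|^2\,(\log h)'$, where $m_j$ is the arc midpoint. The codepoint is thus shifted toward the mode, which pushes boundaries toward $0$ and shrinks the cells there. Rigorously, we compare with Theorem~\ref{thm:highres}, where $|R_{j,n}|\propto h^{-1/3}$ is increasing in $|\theta|$. For fixed $n$ we argue by comparing adjacent gaps in the Lloyd equations.

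\textbf{Limits in $\kappa$.} As $\kappa\to\infty$, $P$ converges weakly to $\delta_0$ and $V_n\to0$. A compactness argument shows that any limit of optimal codebooks must cover the mass near $0$. Not all codepoints need converge to $0$, since distant ones carry negligible mass; we will prove the weaker statement that the codepoints whose cells carry non-vanishing mass converge to $0$, and flag that the literal claim needs this reading. As $\kappa\to0$, $h\to1/(2\pi)$ uniformly and the functional $V_n(\cdot\,;Q)$ converges uniformly in $Q$. Limits of minimizers are therefore minimizers of the uniform problem, which are equally spaced up to rotation; the symmetry from part (a) fixes the rotation.
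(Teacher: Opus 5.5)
\textbf{The central gap is part (a): your proposal does not prove it, and route (ii) cannot work as described.} A priori an optimal configuration need not have a boundary or codepoint at $0$ and at $\pi$. You cannot rotate one into place, because $h$ is not rotation invariant. Even granting such a cut, mirroring the cheaper half gives a configuration whose size is twice that half's codepoint count (adjusted for codepoints on the axis). That equals $n$ only if both halves already carry equally many codepoints. Nothing forces this, and for odd $n$ with boundaries on the axis it is impossible.

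More fundamentally, reflection symmetry of the density cannot by itself give a symmetric optimizer. On $\mathbb{R}$, a symmetric density with two narrow, well-separated peaks has only asymmetric optima for $n=3$: two points in one peak, one in the other. So any proof must use the specific shape of $h$, for instance a uniqueness theorem. As you observe, no such theorem is available off the shelf, since $\log h$ is not concave on the circle.

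The paper does not supply the missing idea either. It forms $\bar Q=\frac12(Q\cup Q')$ and appeals to Jensen. But $Q\cup Q'$ has up to $2n$ points, and a pointwise minimum of convex functions is not convex, so no admissible symmetric $n$-point set satisfying the displayed inequality is ever constructed. You are right that reflecting an optimizer is not enough, but your proposal stops exactly where the real argument is needed.

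\textbf{The normalization issue you flag in (b) is a genuine falsity for odd $n$, not a caveat, and indexing cannot repair it.} Your route to (b) and (d) is the paper's: midpoint boundaries plus Theorem~\ref{thm:main}(c). But a configuration invariant under $\theta\mapsto-\theta$ with $n$ odd has exactly one codepoint in $\{0,\pi\}$, so $a_0=0$ forces a codepoint at the antimode $\pi$.
\begin{itemize}
\item For $n=1$, equation (d) on $[0,2\pi]$ gives $\theta_1^*=\pi$, because $h(2\pi-\theta)=h(\theta)$. Yet with $F(\alpha)=\int_\Gamma d_G(\theta,\alpha)^2h(\theta)\,d\theta$ and $c=1/(2\pi I_0(\kappa))$ you get $F(\pi)-F(0)=-2c\int_{-\pi}^{\pi}u^2\sinh(\kappa\cos u)\,du>0$ for every $\kappa>0$ (pair $u$ with $\pi-u$).
\item For odd $n\ge3$ and large $\kappa$, a codepoint at $\pi$ leaves only $n-1$ points for the bulk, so it loses to the symmetric configuration with a codepoint at the mode.
\end{itemize}
The statement must allow the cut at $\mu+\pi$, equivalently a cell wrapping around $\mu$.

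\textbf{This relocates the inconsistency you attribute to (c).} The literal claim that \emph{all} codepoints tend to $\mu$ as $\kappa\to\infty$ is true for genuine optima. A codepoint staying $\delta$ away leaves the bulk, at scale $\kappa^{-1/2}$, essentially to $n-1$ points. That costs about $D_{n-1}/\kappa$ against the achievable $D_n/\kappa$, where $D_m$ is the optimal $m$-point distortion of $N(0,1)$ and is strictly decreasing in $m$. Your compactness argument is too coarse to see this, and your ``weaker reading'' concedes too much; the real conflict is with (b) for odd $n$.

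\textbf{The $\kappa\to0$ limit and monotonicity are also unresolved.} As $\kappa\to0$, symmetry leaves two rotations, not one: a codepoint at $0$ or a boundary at $0$. Combined with $a_0=0$ it selects $\theta_j^*=(2j-1)\pi/n$, not the stated $2\pi j/n$. Which rotation is the true limit must be decided at the next order in $\kappa$. The fixed-$n$ monotonicity of cell lengths in (c) is also left unproved; the paper only cites the $h^{1/3}$ asymptotics, which say nothing at fixed $n$.

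\textbf{Smaller repairs.}
\begin{itemize}
\item The centroid shift is $\frac{|R_j|^2}{12}(\log h)'$, with no extra factor $\kappa$.
\item For $n=2$ the cells have length exactly $\pi$. What the centroid computation actually needs still holds for all $n\ge2$: every point of a cell lies within distance $<\pi$ of its codepoint.
\item Projecting onto the equator \emph{increases} distances to points more than $\pi/2$ away. For $n=1$ and small $\kappa$, the pole (distortion $\pi^2/4$) beats every point of $\Gamma$ (distortion near $\pi^2/3$). So confine codepoints to $\Gamma$ by definition, not by that argument.
\end{itemize}
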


\begin{proof}
(a) Assume without loss of generality that \(\mu = 0\) (by rotational invariance). Let \(Q = \{\theta_1, \ldots, \theta_n\}\) be an optimal configuration. Consider its reflection about 0: \(Q' = \{-\theta_1, \ldots, -\theta_n\}\). Since the von Mises density is symmetric (\(h(\theta) = h(-\theta)\)), the distortion for \(Q'\) is equal to that for \(Q\):
\[
V_n(P; Q') = V_n(P; Q).
\]
Now consider the averaged configuration \(\bar{Q} = \frac{1}{2}(Q \cup Q')\), where we take the union and possibly adjust angles to lie in \([0, 2\pi)\). By the convexity of the squared distance function on the circle (within a hemisphere) and Jensen's inequality, we have for any \(\theta\):
\[
\min_{q \in \bar{Q}} |\theta - q|^2 \leq \frac{1}{2} \left( \min_{q \in Q} |\theta - q|^2 + \min_{q \in Q'} |\theta - q|^2 \right).
\]
Multiplying both side by $h(\theta)$ and integrating,

\[
V_n(P; \bar{Q}) \leq \frac{1}{2} \left( V_n(P; Q) + V_n(P; Q') \right) = V_n(P; Q).
\]
Thus, \(\bar{Q}\) is also optimal. Moreover, \(\bar{Q}\) is symmetric by construction: if \(\theta \in \bar{Q}\), then \(-\theta \in \bar{Q}\). Therefore, there exists an optimal symmetric configuration. By relabeling, we can write it as \(\theta_j^* = -\theta_{n+1-j}^*\) for \(j = 1, \ldots, n\).

(b) Theorem~\ref{thm:main} implies that for any optimal configuration, Voronoi cells are contiguous arcs and boundaries are geodesic equidistance points. For the symmetric configuration, symmetry of density and codepoints forces the boundaries to satisfy $a_j=-a_{n-j}$, and the boundary condition reduces to $|a_j-\theta_j^*|=|a_j-\theta_{j+1}^*|$ (shorter-arc distance on the circle, which here coincides with absolute difference since each cell is contained in a semicircle around its representative).

(c) For \(\kappa > 0\), we know that the density \(h(\theta)\) is unimodal with maximum at \(\theta = \mu = 0\). From Theorem~\ref{thm:highres}, we have \(\lambda(\theta) \propto h(\theta)^{1/3} \propto e^{\frac{\kappa}{3} \cos\theta}\). Thus, the optimal point density is higher near \(\theta = 0\), meaning more codepoints and smaller cells in that region.

As \(\kappa \to \infty\), we know that \(h(\theta)\) converges to a Dirac delta at \(\theta = 0\). In this limit, the optimal quantization problem reduces to quantizing a point mass, for which any single codepoint at 0 gives zero distortion. Formally, for any \(\epsilon > 0\), there exists \(\kappa_0\) such that for \(\kappa > \kappa_0\), $P(-\epsilon<\theta<\epsilon)>(1-\epsilon)$.
Thus, as \(\kappa \to \infty\), the optimal codepoints must converge to 0.

As \(\kappa \to 0\), \(h(\theta) \to \frac{1}{2\pi}\), the uniform density. By Theorem~\ref{thm:main}, we say that the optimal codepoints are equally spaced and the Voronoi cells are equal arcs. The symmetry condition then forces \(\theta_j^* = 2\pi j/n\) (with an appropriate shift).

(d) For each cell $[a_{j-1},a_j]$ of an optimal configuration on a great circle, Theorem~\ref{thm:main}(c) gives the local centroid condition
\[
\int_{a_{j-1}}^{a_j} (\theta-\theta_j^*)\,h(\theta)\,d\theta = 0.
\]
Substituting $h(\theta)\propto e^{\kappa\cos(\theta-\mu)}$ and ignoring the common constant factor yields Equation (\ref{eq:centroid_vm}). Then distortion for the von Mises case is
\[
V_n(P)=\frac{1}{2\pi I_0(\kappa)}\sum_{j=1}^n\int_{a_{j-1}}^{a_j} (\theta-\theta_j^*)^2 e^{\kappa\cos(\theta-\mu)}\,d\theta,
\]
which completes the characterization.
\end{proof}

\section{Applications}
\label{sec:appli}
\subsection{Optimal quantization under chordal vs.\ geodesic distortion}
\label{subsec:chordal-vs-geodesic}

In several scenarios it is natural to measure distances on $\mathbb{S}^2$ via the ambient Euclidean norm $d_C(x,y)=\|x-y\|$ rather than the intrinsic geodesic distance $d_G(x,y)=\arccos\langle x,y\rangle$. This subsection compares these two metrics and their impact on optimal quantization along spherical curves.

\paragraph{Relationship between chordal and geodesic distance}
For two points $x, y \in \mathbb{S}^2$ (with $\|x\| = \|y\| = 1$) with geodesic distance $\psi = d_G(x,y)$, the squared chordal and geodesic distances satisfy
\begin{equation*}\label{eq:chordal-geodesic}
d_C^2(x,y)
= \|x-y\|^2
= 2(1-\cos\psi)
= 4\sin^2(\psi/2),\qquad
d_G^2(x,y)=\psi^2.
\end{equation*}
Expanding $\cos\psi$ near $\psi=0$ yields
\begin{equation*}\label{eq:chordal-geodesic-taylor}
d_C^2(x,y)
= \psi^2 - \frac{\psi^4}{12} + O(\psi^6),\qquad
d_G^2(x,y)=\psi^2.
\end{equation*}
The relative error between chordal and geodesic squared distances is
\[
\frac{d_C^2(x,y) - d_G^2(x,y)}{d_G^2(x,y)} = -\frac{\psi^2}{12} + \frac{\psi^4}{360} + O(\psi^6),
\]
which becomes significant for $\psi > 1$ radian (about 57 degrees).
Thus for small separations, chordal and geodesic squared distances coincide to second order; the first nontrivial discrepancy is of order $\psi^4$ and yields a relative error of order $\psi^2$.

On the equator, parameterized by angles $\theta,\alpha\in[0,2\pi)$ with angular separation $\psi=\min\{|\theta-\alpha|,2\pi-|\theta-\alpha|\}$, the squared chordal distance becomes
\[
d_C^2(\theta,\alpha)
= 2-2\cos(\theta-\alpha)
= 4\sin^2\Big(\frac{\theta-\alpha}{2}\Big).
\]

\paragraph{Chordal optimality conditions on the equator}
Let $\Gamma$ be the equator with nonuniform probability $h(\theta)$ and squared chordal distortion. For an optimal $n$–point codebook $\{\theta_j^*\}_{j=1}^n$ and associated Voronoi arcs $[a_{j-1},a_j]$:

\begin{enumerate}[label=\roman*.]
\item \emph{Boundary condition.}
At each internal boundary \(a_j\) the two neighbouring representatives must be equidistant in the chordal metric:
\[
d_C(a_j,\theta_j^*) = d_C(a_j,\theta_{j+1}^*),
\qquad j = 1, \ldots, n-1.
\]
For contiguous arcs shorter than \(\pi\), this is equivalent to
\[
|a_j-\theta_j^*| = |a_j-\theta_{j+1}^*|.
\]

\item \emph{Chordal centroid condition.}
The distortion on a cell is
\[
F_j(\alpha)
=\int_{a_{j-1}}^{a_j} d_C^2(\theta,\alpha)\,h(\theta)\,d\theta
=\int_{a_{j-1}}^{a_j} \bigl(2-2\cos(\theta-\alpha)\bigr) h(\theta)\,d\theta.
\]
The optimal representative \(\theta_j^*\) satisfies \(F_j'(\theta_j^*)=0\), hence
\begin{equation}\label{eq:chordal-centroid}
\int_{a_{j-1}}^{a_j}\sin(\theta-\theta_j^*)\,h(\theta)\,d\theta = 0,
\qquad j = 1,\ldots,n.
\end{equation}
This is the chordal analogue of the intrinsic centroid condition.
\end{enumerate}

For a small cell centred at $\theta_0$ of length $\Delta L$ and density approximately $h(\theta_0)$, therefore,
\[
d_C^2(\theta_0+t,\theta_0)=t^2-\frac{t^4}{12}+O(t^6).
\]
For a cell with representative $\theta_0 + \delta$ (where $\delta$ is a small offset from the center), the distortion to second order in $t$ and $\delta$ is:
\[
d_C^2(\theta_0 + t, \theta_0 + \delta) = (t - \delta)^2 - \frac{(t - \delta)^4}{12} + O((t - \delta)^6).
\]

The contribution to the distortion functional from this cell is:
\begin{align*}
D_{\text{cell}} &= \int_{-\Delta L/2}^{\Delta L/2} d_C^2(\theta_0 + t, \theta_0 + \delta) h(\theta_0 + t) dt \\
&= \int_{-\Delta L/2}^{\Delta L/2} \left[(t - \delta)^2 - \frac{(t - \delta)^4}{12}\right] [h(\theta_0) + h'(\theta_0)t + O(t^2)] dt.
\end{align*}
Expanding and computing term by term, and combining and keeping terms up to $(\Delta L)^3$, and $\delta^2$ survives, but terms like $\delta^2 \Delta L^2$ did not, 
\[
D_{\text{cell}} = h(\theta_0) [\frac{(\Delta L)^3}{12}  +\delta^2 \Delta L] - \frac{h'(\theta_0) \delta (\Delta L)^3}{6} + O((\Delta L)^5).
\]
We also compute the minimum cell distortion and we obtain:
\begin{align*}
D_{\text{cell}}^* 
&= h(\theta_0) \frac{(\Delta L)^3}{12} \left[ 1 - \frac{{h'(\theta_0)^2}(\Delta L)^2}{6h(\theta_0)} + \frac{[h'(\theta_0)]^2 (\Delta L)^2}{12h(\theta_0)^2} \right] + O((\Delta L)^5).
\end{align*}
Now substitute $\Delta L = 1/(n\lambda(\theta))$:
\[
D_{\text{cell}}^* = \frac{h(\theta)}{12n^3 \lambda(\theta)^3} \left[ 1 - \frac{h'(\theta_0)^2}{6n^2 \lambda(\theta)^2h(\theta_0)} + \frac{[h'(\theta_0)]^2}{12h(\theta_0)^2 n^2 \lambda(\theta)^2} \right] + O(n^{-5}).
\]

The total distortion is the integral over all cells (per unit length):
\[
V_n = \int_0^{2\pi} \frac{h(\theta)}{12n^2 \lambda(\theta)^2} \left[ 1 - \frac{h'(\theta_0)^2}{6n^2 \lambda(\theta)^2h(\theta_0)} + \frac{[h'(\theta_0)]^2}{12h(\theta_0)^2 n^2 \lambda(\theta)^2} \right] d\theta.
\]

To leading order:
\[
V_n \sim \frac{1}{12n^2} \int_0^{2\pi} \frac{h(\theta)}{\lambda(\theta)^2} d\theta.
\]

This is identical to the geodesic case at leading order. To see the first correction, we need to include the next term. From the chordal expansion:
\[
d_C^2 = t^2 - \frac{t^4}{12} + \cdots
\]
The $t^4$ term modifies the distortion by a factor. For a uniform distribution, the optimal density remains uniform, but the distortion constant changes. Specifically, for a cell of length $\Delta L$, the average chordal distortion with optimal representative is:
\[
\frac{1}{\Delta L} \int_{-\Delta L/2}^{\Delta L/2} \left(t^2 - \frac{t^4}{12}\right) dt = \frac{(\Delta L)^2}{12} - \frac{(\Delta L)^4}{960}.
\]

Compared to geodesic distortion $\frac{(\Delta L)^2}{12}$, the chordal distortion is smaller by a factor of $1 - \frac{(\Delta L)^2}{80}$. When $\Delta L = 1/(n\lambda)$, this gives a correction factor to the asymptotic distortion.

Thus, for chordal distortion, the high-resolution point density remains $\lambda_C(\theta) \propto h(\theta)^{1/3}$, but the constant in the distortion formula differs.

Figure~\ref{fig:2nd} represents the comparison of intrinsic and chordal squared distances on the sphere as a function of geodesic separation, whereas Figure~\ref{fig:quantizer-comparison} describes the graphical analysis for optimal-7 means for a von Mises density. We also introduce the chordal Lloyd algorithm for nonuniform density, and Figure~\ref{fig:convergence} shows the convergence of this algorithm. Figure~\ref{fig:distortion} represents the convergence analysis for the chordal distortion corresponding to von Mises distribution. Figure~\ref{fig:displacement} shows the convergence of maximum displacement between two consecutive time steps. We have followed the chordal Lloyd algorithm here and as the iteration number increases the graph converges.

\begin{figure}[H]
\centering

\begin{subfigure}[t]{0.48\textwidth}
\centering
\begin{tikzpicture}[scale=0.8]
\begin{axis}[
    xlabel={$\psi$ (radians)},
    ylabel={Squared distance},
    legend pos=north west,
    grid=both,
    domain=0:3.14159,
    samples=150
]
\addplot[blue, thick] {x^2};
\addplot[red, thick] {2 - 2*cos(deg(x))};
\addplot[green, thick, dashed] {x^2 - x^4/12};

\legend{
  {$d_G^2(\psi)=\psi^2$},
  {$d_C^2(\psi)=2-2\cos\psi$},
  {$d_C^2$ (4th-order approx.)}
}
\end{axis}
\end{tikzpicture}
\caption{Squared geodesic and chordal distances as functions of \(\psi\), together with the fourth-order approximation of \(d_C^2\). For small \(\psi\), all curves coincide up to quadratic order; the deviation of \(d_C^2\) from \(\psi^2\) becomes visible at larger angles.}
\end{subfigure}
\hfill

\begin{subfigure}{0.48\textwidth}
\centering
\begin{tikzpicture}[scale=0.8]
\begin{axis}[
    xlabel={$\psi$ (radians)},
    ylabel={Relative error},
    legend pos=north east,
    grid=both,
    domain=0:3.14,
    samples=50
]
\addplot[red, thick] {(2 - 2*cos(deg(x)) - x^2)/x^2};
\legend{$(d_C^2 - d_G^2)/d_G^2$};
\end{axis}
\end{tikzpicture}
\caption{Relative error between chordal and geodesic squared distances. The error grows quadratically in $\psi$ for small angles and becomes significant beyond about one radian.}
\end{subfigure}
\caption{Comparison of intrinsic and chordal squared distances on $\mathbb{S}^2$ as functions of geodesic separation $\psi$.}
\label{fig:2nd}
\end{figure}
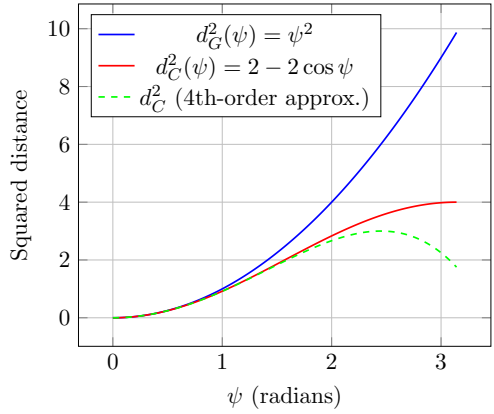
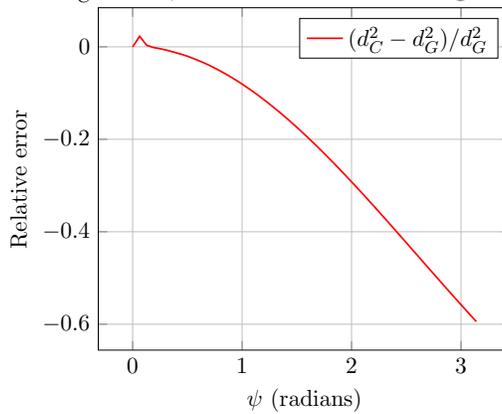

\begin{figure}[htbp]
\centering
\begin{tikzpicture}
\begin{axis}[
    width=1.0\linewidth,
    height=6cm,
    xlabel={$\theta$ (radians)},
    ylabel={Optimal codepoints and density},
    grid=both,
    domain=0:2*pi,
    samples=200,
    xmin=0, xmax=6.28,
    ymin=0, ymax=1.2,
    xtick={0,1.57,3.14,4.71,6.28},
    xticklabels={0,$\pi/2$,$\pi$,$3\pi/2$,$2\pi$},
    ytick={0,0.2,0.4,0.6,0.8,1.0},
    legend pos=north east,
    legend cell align={left}
]
\addplot[gray!30, fill=gray!30, domain=0:2*pi]
  {0.03258126*exp(3*cos(deg(x)))} \closedcycle;

\addplot[black, thick, domain=0:2*pi]
  {0.03258126*exp(3*cos(deg(x)))};

\addplot[blue, only marks, mark=*, mark size=3, mark options={solid}] coordinates {
    (0.365,0.05) (0.784,0.05) (1.387,0.05) (3.142,0.05)
    (4.896,0.05) (5.499,0.05) (5.918,0.05) 
};


\addplot[red, only marks, mark=x, mark size=3, mark options={solid}] coordinates {
    (0.363,0.1) (0.781,0.1) (1.384,0.1) (3.142,0.1)
    (4.900,0.1) (5.502,0.1) (5.921,0.1) 
};

\foreach \x/\y in {0.392/0.387, 1.178/1.171, 1.963/1.956, 2.749/2.742,
                   3.534/3.527, 4.320/4.313, 5.105/5.098, 5.891/5.884} {
    \edef\temp{\noexpand\draw[->, black!50, thin]
        (axis cs:\x,0.06) -- (axis cs:\y,0.09);}
    \temp
}

\legend{
        Chordal quantizers (red crosses),
        Density $h(\theta)$ (von Mises $\kappa=3$),
        Geodesic quantizers (blue dots)}

\end{axis}
\end{tikzpicture}
\caption{Optimal 7-means for von Mises distribution. Chordal quantizers (red crosses) shift slightly toward $\pi$ compared to geodesic quantizers (blue dots).}
\label{fig:quantizer-comparison}
\end{figure}
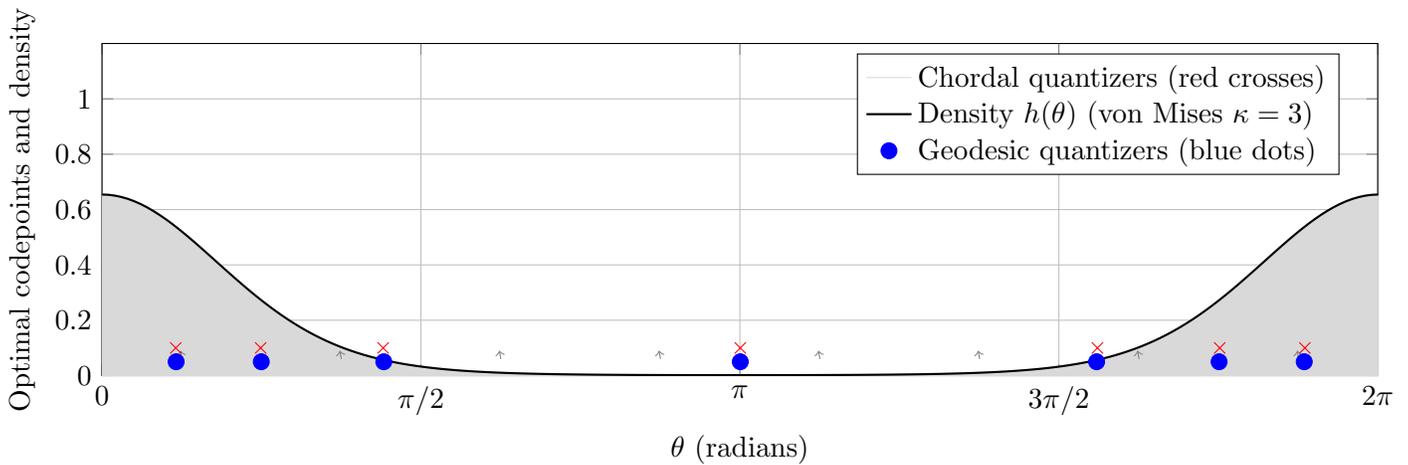

\begin{table}[H]
\centering
\caption{Numerical comparison of optimal 7-means for von Mises distribution ($\kappa=3, \mu=0$)}
\begin{tabular}{lccccccc}
\hline
Metric & $\theta_1^*$ & $\theta_2^*$ & $\theta_3^*$ & $\theta_4^*$ & $\theta_5^*$ & $\theta_6^*$ & $\theta_7^*$ \\
\hline
Geodesic & 0.365 & 0.784 & 1.387 & 3.142 & 4.896 & 5.499 & 5.918\\
Chordal & 0.363 & 0.781 & 1.384 & 3.142 & 4.900 & 5.502 & 5.921 \\
Difference & 0.002 & 0.003 & 0.003 & 0.000 & 0.004 & 0.003 & 0.003 \\
\hline
\end{tabular}
\caption*{Chordal quantizers are shifted slightly toward the antipodal point ($\theta=\pi$).}
\label{tab:chordal-geodesic-comparison}
\end{table}

\begin{algorithm}[htbp]
\caption{Chordal Lloyd algorithm for nonuniform density}
\label{alg:chordal-lloyd}
\begin{algorithmic}[1]
\Require Density $h(\theta)$, initial codebook $\Theta^{(0)}=\{\theta_1^{(0)},\ldots,\theta_n^{(0)}\}$, tolerance $\epsilon>0$, max iterations $M$
\Ensure Optimized codebook $\Theta^*$
\State $t \gets 0$
\Repeat
  \State \textbf{Assignment:} Partition $[0,2\pi]$ into cells $R_j^{(t)}$ using chordal distance.
  \Statex \hspace{\algorithmicindent} $R_j^{(t)}=\{\theta:\ d_C(\theta,\theta_j^{(t)})\le d_C(\theta,\theta_k^{(t)}),\ \forall k\ne j\}$
  \Statex \hspace{\algorithmicindent} where $d_C(\theta,\alpha)=2|\frac{\sin(\theta-\alpha)}{2}|$.
  \State \textbf{Update:} For each $j=1,\ldots,n$, compute $\theta_j^{(t+1)}$ by solving
  \Statex \hspace{\algorithmicindent} $\int_{R_j^{(t)}} \sin(\theta-\theta_j^{(t+1)})\,h(\theta)\,d\theta = 0$
  \Statex \hspace{\algorithmicindent} using Newton's method initialized at $\theta_j^{(t)}$.
  \State \textbf{Check Convergence:} $\Delta^{(t)} = \max_j \left|\theta_j^{(t+1)}-\theta_j^{(t)}\right|$
  \State $t \gets t+1$
\Until{$\Delta^{(t-1)}<\epsilon$ \textbf{or} $t\ge M$}
\State $\Theta^* \gets \Theta^{(t)}$.
\end{algorithmic}
\end{algorithm}

\begin{figure}[htbp]
\centering
\includegraphics[width=0.9\linewidth]{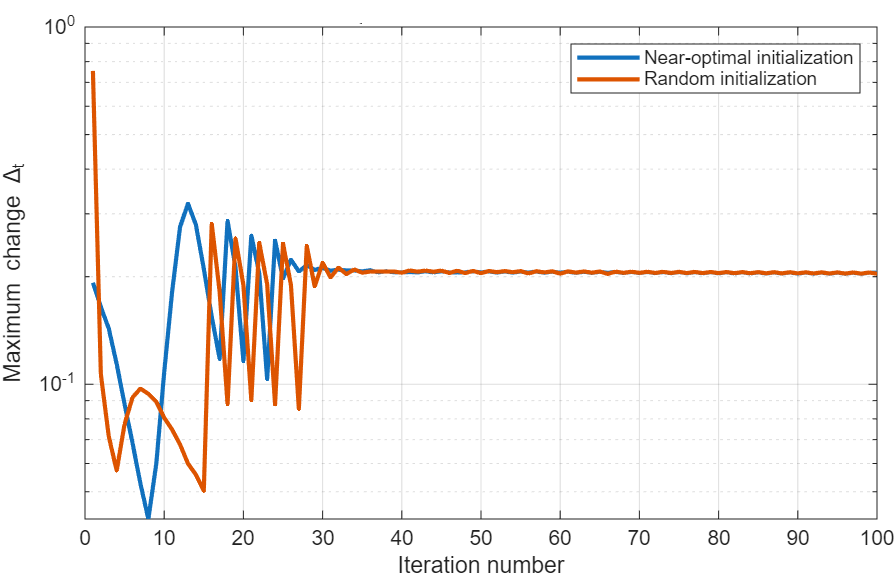}
\caption{Convergence of Algorithm 1 for n=7, $\kappa$=3}
\label{fig:convergence}
\end{figure}
\begin{figure}[htbp]
\centering
\includegraphics[width=0.9\linewidth]{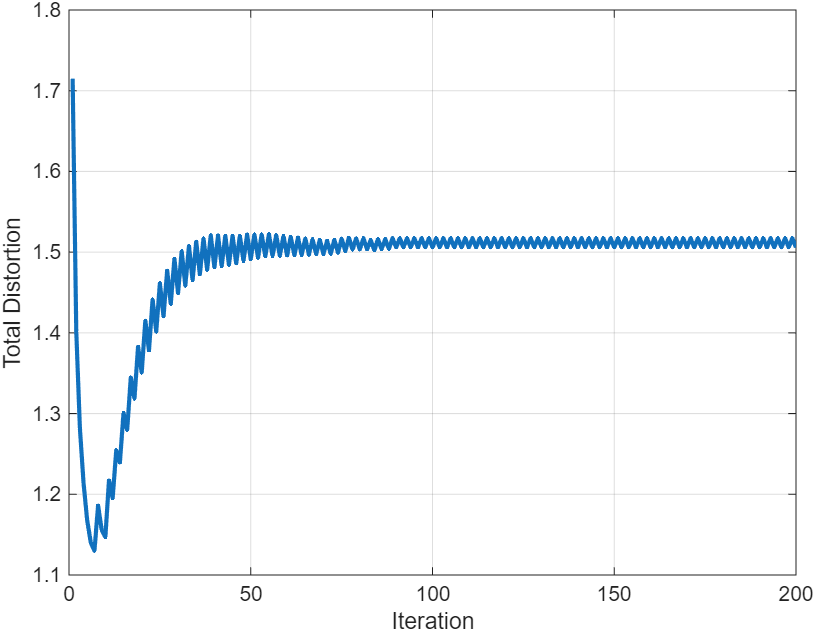}
\caption{Analysis and convergence for the distortion error corresponding to chordal distribution}
\label{fig:distortion}
\end{figure}
\begin{figure}[htbp]
\centering
\includegraphics[width=0.8\linewidth]{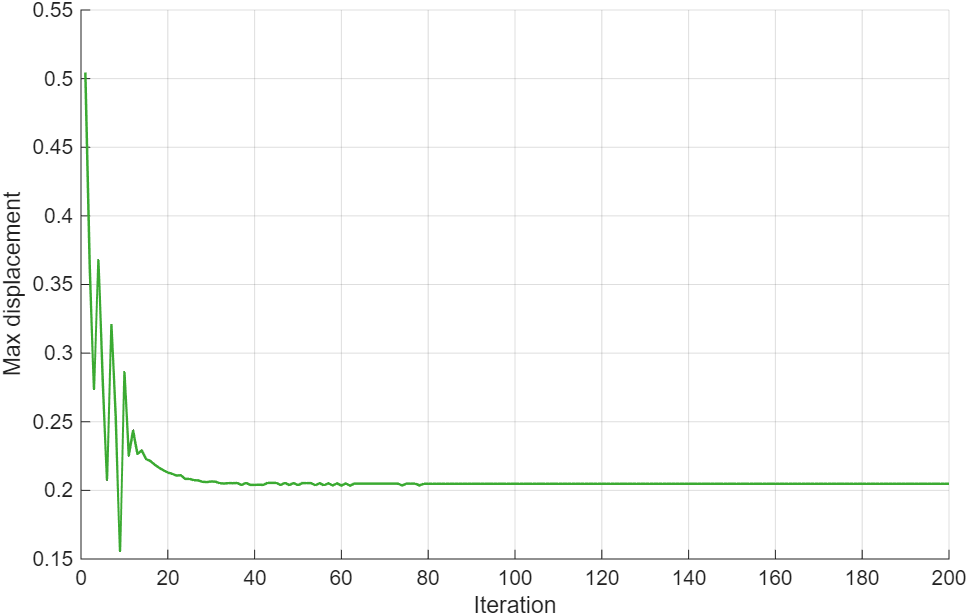}
\caption{Maximum displacement analysis and convergence due to Lloyd iteration scheme.}
\label{fig:displacement}
\end{figure}
Figure~\ref{fig:convergence} shows the convergence behavior of Algorithm~\ref{alg:chordal-lloyd} by plotting the maximum update versus iteration $t$ on a logarithmic scale for 
$n=7$ and a von Mises density with $\kappa=3$. When initialized near the optimal configuration, the 
algorithm rapidly enters a Newton-like regime and exhibits fast, nearly quadratic convergence, 
reaching machine precision within a few iterations. In contrast, a random initial guess leads to a 
slower initial decay due to repeated changes in the Voronoi partition, which introduce nonsmoothness 
into the optimization landscape. Once the partition stabilizes, convergence accelerates, 
demonstrating the robustness of the algorithm with respect to initialization.
\subsection{Quantization of mixture distributions on spherical curves}
\label{subsec:mixture}

Mixture distributions on spherical curves arise naturally in applications where data originates from multiple sources or populations. Consider a probability measure $P$ on a great circle $\Gamma$ (parameterized by $\theta \in [0,2\pi)$) with density given by a mixture of $K$ von Mises distributions:

\begin{equation*}
h(\theta) = \sum_{k=1}^{K} w_k h_k(\theta), \quad h_k(\theta) = \frac{1}{2\pi I_0(\kappa_k)} e^{\kappa_k \cos(\theta - \mu_k)},
\label{eq:mixture}
\end{equation*}
with weights $w_k>0$, $\sum_k w_k=1$, mean directions $\mu_k$, and concentrations $\kappa_k\ge0$. For squared geodesic distortion, the general theory yields:
\begin{enumerate}[label=\roman*.]
\item By Theorem~\ref{thm:main}(a), Voronoi cells $[a_{j-1},a_j]$ are contiguous arcs with
$a_0=0$, $a_n=2\pi$, and
\[
|a_j-\theta_j^*|=|a_j-\theta_{j+1}^*|,\qquad j=1,\ldots,n-1.
\]

\item By Theorem~\ref{thm:main}(c), on each cell the representative $\theta_j^*$ minimizes
\[
F_j(\alpha)=\int_{a_{j-1}}^{a_j}|\theta-\alpha|^2\, h(\theta)\,d\theta
=\sum_{k=1}^{K}w_k\int_{a_{j-1}}^{a_j}|\theta-\alpha|^2\, h_k(\theta)\,d\theta,
\]
so differentiating yields
\[
\sum_{k=1}^{K}w_k\int_{a_{j-1}}^{a_j}(\theta-\theta_j^*)\,e^{\kappa_k\cos(\theta-\mu_k)}\,d\theta=0,
\qquad j=1,\ldots,n.
\]

\item If a component has large concentration $\kappa_k$ while others remain moderate,
then the mass near $\mu_k$ dominates locally and, in the high--resolution regime,
approximately $n_k\approx n w_k$ codepoints are allocated in a neighborhood of $\mu_k$,
with tighter spacing for larger $\kappa_k$.
We propose an algorithm for mixture quantization for nonuniform densities; see
Algorithm~\ref{alg:mixture-quant}.
\end{enumerate}

\begin{algorithm}[htbp]
\caption{Mixture quantization for von Mises distributions}
\label{alg:mixture-quant}
\begin{algorithmic}[1]
\Require Mixture weights $w_k$, parameters $(\mu_k,\kappa_k)$ for $k=1,\ldots,K$, initial codebook $Q^{(0)}=\{q_1^{(0)},\ldots,q_n^{(0)}\}$,
tolerance $\epsilon>0$, discretization points $\{\theta_1,\ldots,\theta_M\}$.
\Ensure Optimized codebook $Q^*=\{q_1^*,\ldots,q_n^*\}$.

\State $t \gets 0$
\Repeat
  \State \textbf{E-step:} Compute responsibilities for each $\theta_j$ and component $k$.
  \Statex \hspace{\algorithmicindent} $\displaystyle
  \gamma_{jk}^{(t)}=\frac{w_k\,h_k(\theta_j)}{\sum_{l=1}^K w_l\,h_l(\theta_j)},
  \qquad
  h_k(\theta)=\frac{1}{2\pi I_0(\kappa_k)}e^{\kappa_k\cos(\theta-\mu_k)}.$

  \State \textbf{Assignment:} Assign each $\theta_j$ to the closest codepoint.
  \Statex \hspace{\algorithmicindent} $\displaystyle
  \ell_j^{(t)}=\arg\min_{i=1,\ldots,n}\ \sum_{k=1}^K \gamma_{jk}^{(t)}\, d_G(\theta_j,q_i^{(t)})^2,$
  \Statex \hspace{\algorithmicindent} where $\displaystyle d_G(\theta,q)=\min\{|\theta-q|,\ 2\pi-|\theta-q|\}.$

  \State \textbf{M-step:} Update each codepoint as the responsibility-weighted mean.
  \Statex \hspace{\algorithmicindent} $\displaystyle
  q_i^{(t+1)}=
  \frac{\sum_{j:\,\ell_j^{(t)}=i}\Big(\sum_{k=1}^K \gamma_{jk}^{(t)}\Big)\,\theta_j}
       {\sum_{j:\,\ell_j^{(t)}=i}\Big(\sum_{k=1}^K \gamma_{jk}^{(t)}\Big)},
  \qquad i=1,\ldots,n.$
  \Statex \hspace{\algorithmicindent} If the denominator is $0$, set $q_i^{(t+1)}\gets q_i^{(t)}$.

  \State Normalize to $[0, 2\pi)$: $q_i^{(t+1)} \gets q_i^{(t+1)} \bmod 2\pi$ for all $i$.
  \State Sort codepoints: $\{q_1^{(t+1)},\ldots,q_n^{(t+1)}\} \gets$ sort in ascending order.
  \State $\Delta \gets \max_{i=1,\ldots,n} d_G\!\big(q_i^{(t+1)},q_i^{(t)}\big)$
  \State $t \gets t+1$
\Until{$\Delta<\epsilon$}

\State \Return $Q^*=\{q_1^{(t)},\ldots,q_n^{(t)}\}$
\end{algorithmic}
\end{algorithm}

\begin{table}[!htbp]
\centering
\caption{Optimal 4-means for a mixture of two von Mises distributions}
\label{tab:mixture-results}
\begin{tabular}{ccccccc}
\toprule
$w_1$ & $\kappa_1$ & $\kappa_2$ & $\theta_1^*$ & $\theta_2^*$ & $\theta_3^*$ & $\theta_4^*$ \\
\midrule
0.5 & 2  & 2 & 0.79 & 2.36 & 3.92 & 5.48 \\
0.7 & 5  & 1 & 0.79 & 2.36 & 3.92 & 5.48 \\
0.3 & 10 & 2 & 0.78 & 2.35 & 3.92 & 5.49 \\
\bottomrule
\end{tabular}
\end{table}

\subsection{Cosine–modulated density on a great circle}
We consider the equator $\Gamma$ identified with $\theta\in[0,2\pi)$ and a simple nonuniform density \cite{MardiaJupp2000}
\begin{equation*}\label{eq:cos-density}
h_\alpha(\theta)
:= \frac{1+\alpha\cos\theta}{2\pi},\qquad \theta\in[0,2\pi),\quad |\alpha|<1,
\end{equation*}
which models a single preferred direction at $\theta=0$ with strength controlled by $\alpha$. We have used $\alpha$ as a suffix here to keep uniformity in this problem. The normalization condition
\[
\int_0^{2\pi} h_\alpha(\theta)\,d\theta
= \frac{1}{2\pi}\int_0^{2\pi}\big(1+\alpha\cos\theta\big)d\theta
= 1
\]
is immediate, and positivity follows from $1+\alpha\cos\theta\ge 1-|\alpha|>0$.
The high–resolution optimal point density is
\begin{equation*}\label{eq:cos-lambda-def}
\lambda_\alpha(\theta)
= \frac{h_\alpha(\theta)^{1/3}}{\displaystyle\int_0^{2\pi} h_\alpha(u)^{1/3}\,du},\qquad \theta\in[0,2\pi).
\end{equation*}
Therefore, the denominator $Z_\alpha
:= \int_0^{2\pi} h_\alpha(\theta)^{1/3}\,d\theta
= (2\pi)^{2/3}\int_0^{2\pi}\big(1+\alpha\cos\theta\big)^{1/3}\,\frac{d\theta}{2\pi}.$

For small $|\alpha|$ one may expand $(1+\alpha\cos\theta)^{1/3}$ as
\[
(1+\alpha\cos\theta)^{1/3}
= 1+\frac{1}{3}\alpha\cos\theta - \frac{\alpha^2}{9}\cos^2\theta + O(\alpha^3)
= 1+\frac{\alpha}{3}\cos\theta - \frac{\alpha^2}{9}\cos^2\theta + O(\alpha^3).
\]
Therefore,
\begin{equation*}\label{eq:cos-Z-expansion}
Z_\alpha
= (2\pi)^{2/3}\left(1 - \frac{\alpha^2}{18} + O(\alpha^3)\right).
\end{equation*}
Hence, we obtain
\begin{equation*}\label{eq:cos-lambda-expansion}
\lambda_\alpha(\theta)
= \frac{\big(1+\alpha\cos\theta\big)^{1/3}}{2\pi\,\big(1-\frac{\alpha^2}{18}+O(\alpha^3)\big)}
= \frac{1}{2\pi}\Big(1+\tfrac{\alpha}{3}\cos\theta + O(\alpha^2)\Big),
\end{equation*}
showing that the point density is increased near $\theta=0$ when $\alpha>0$ and reduced near $\theta=\pi$. This is the quantification of the asymptotic point density.

Now, $\{R_{j,n}\}_{j=1}^n$ being the Voronoi cells of an optimal $n$–point quantizer, with $R_{j,n}$ containing a point $s_{j,n}$. Theorem~\ref{thm:highres} implies
\begin{equation*}\label{eq:cos-cell-length}
|R_{j,n}|
\sim \frac{1}{n\,\lambda_\alpha(s_{j,n})}
\propto n^{-1}\big(h_\alpha(s_{j,n})\big)^{-1/3},
\qquad n\to\infty.
\end{equation*}
Thus cells are shortest where $h_\alpha$ is largest (near $\theta=0$ for $\alpha>0$) and longest where $h_\alpha$ is smallest (near $\theta=\pi$).

For the asymptotic error, Theorem~\ref{thm:asymp+vm} yields
\begin{equation*}\label{eq:cos-asymp-error}
\lim_{n\to\infty} n^2 V_n(P_\alpha)
= \frac{1}{12}Z_\alpha^3,
\end{equation*}

Therefore,
\begin{equation}\label{eq:cos-asymp-error-expansion}
\lim_{n\to\infty} n^2 V_n(P_\alpha)
= \frac{(2\pi)^2}{12}\left(1 - \frac{\alpha^2}{6} + O(\alpha^3)\right).
\end{equation}
The leading term $(2\pi)^2/12$ is the uniform–density constant; the negative $O(\alpha^2)$ correction reflects the reduction in distortion due to concentrating mass (and codepoints) near the preferred direction. This corresponds to the asymptotic cell lengths and its associated error \cite{GershoGray1991}.

\subsection{Bimodal stationary phase law on the great circle}
We consider a bimodal phase distribution on the great circle, motivated by stationary laws of phase dynamics with two preferred orientations \cite{MardiaJupp2000}. We consider
\begin{equation}\label{eq:bimodal-density}
h_\beta(\theta)
:= \frac{1}{2\pi I_0(\beta)}\exp\big(\beta\cos(2\theta)\big),\qquad \beta>0,\ \theta\in[0,2\pi),
\end{equation}
where $I_0(\beta)$ is the modified Bessel function of order zero. The factor $\cos(2\theta)$ produces two modes per $2\pi$--period, located at $\theta=0$ and $\theta=\pi$. Normalization is standard:
\[
\int_0^{2\pi} h_\beta(\theta)\,d\theta
= \frac{1}{2\pi I_0(\beta)}\int_0^{2\pi} e^{\beta\cos(2\theta)}\,d\theta
=1.
\]

By Theorem~\ref{thm:highres}, the high--resolution point density is
\begin{equation}\label{eq:bimodal-lambda}
\lambda_\beta(\theta)
= \frac{h_\beta(\theta)^{1/3}}{\displaystyle\int_0^{2\pi} h_\beta(u)^{1/3}\,du}
= \frac{\exp\big(\frac{\beta}{3}\cos(2\theta)\big)}{\displaystyle\int_0^{2\pi}\exp\big(\tfrac{\beta}{3}\cos(2u)\big)\,du}.
\end{equation}
From the denominator, we obtain
\begin{equation}\label{eq:bimodal-denom}
\int_0^{2\pi}\exp\big(\tfrac{\beta}{3}\cos(2u)\big)\,du
=2\pi I_0\!\left(\frac{\beta}{3}\right).
\end{equation}

We also use the expansion
\[
\exp\!\left(\frac{\beta}{3}\cos(2\theta)\right)
=1+\frac{\beta}{3}\cos(2\theta)+O(\beta^2).
\]
Therefore,
\begin{equation}\label{eq:bimodal-lambda-expansion}
\lambda_\beta(\theta)
=\frac{1}{2\pi I_0\!\left(\frac{\beta}{3}\right)}
\left[1+\frac{\beta}{3}\cos(2\theta)+O(\beta^2)\right].
\end{equation}

The asymptotic error is given by Theorem~\ref{thm:asymp+vm} as
\begin{equation}\label{eq:bimodal-asymp-error}
\lim_{n\to\infty} n^2 V_n(P_\beta)
= \frac{1}{12}\left(\int_0^{2\pi} h_\beta(\theta)^{1/3}\,d\theta\right)^3.
\end{equation}
We can rewrite the integral using the periodicity of $\cos(2\theta)$:
\[
\int_0^{2\pi} h_\beta(\theta)^{1/3}\,d\theta
= \frac{1}{(2\pi I_0(\beta))^{1/3}}\int_0^{2\pi}\exp\left(\tfrac{\beta}{3}\cos(2\theta)\right)\,d\theta
= \frac{2\pi}{(2\pi I_0(\beta))^{1/3}}\,I_0\!\left(\tfrac{\beta}{3}\right).
\]
Therefore,
\begin{equation}\label{eq:bimodal-Z}
\int_0^{2\pi} h_\beta(\theta)^{1/3}\,d\theta
= (2\pi)^{2/3}\frac{I_0(\beta/3)}{I_0(\beta)^{1/3}},
\end{equation}
and so
\begin{equation}\label{eq:bimodal-asymp-error-explicit}
\lim_{n\to\infty} n^2 V_n(P_\beta)
= \frac{1}{12}(2\pi)^2\left(\frac{I_0(\beta/3)}{I_0(\beta)^{1/3}}\right)^3.
\end{equation}
This expression shows how the asymptotic constant depends on $\beta$ through the ratio of Bessel functions. As $\beta\to0$, $I_0(\beta)\to1$ and $I_0(\beta/3)\to1$, recovering the uniform constant $(2\pi)^2/12$. For large $\beta$, standard asymptotics for $I_0$ can be used to obtain the leading dependence on $\beta$.

\subsection{Nonuniform quadrature on a spherical arc}
We now show how high--resolution quantizers can be used to construct quadrature rules adapted to a nonuniform density on a spherical curve. Let $\Gamma$ be a geodesic arc of length $L$, parameterized by arc--length $s\in[0,L]$, and let $P$ have a continuous nonuniform density $h(s)$ with respect to $ds$, normalized so that $\int_0^{L}h(s)\,ds=1$ \cite{Pag`esPrintems2003}. Here we adapt the same idea to intrinsic quantization on spherical curves.

Suppose we wish to approximate integrals of the form
\begin{equation}\label{eq:quad-target}
I_j=\int_\Gamma f(s)\,h(s)\,ds
\end{equation}
by a discrete rule
\begin{equation}\label{eq:quad-rule}
Q_n(f):=\sum_{j=1}^n w_j f(q_j),
\end{equation}
where $q_j\in\Gamma$ and $w_j>0$ are weights summing to $1$. A natural choice is to take $q_j$ as the optimal quantizers for $P$ and $w_j$ as the masses of their Voronoi cells.

Let $Q_n^*=\{q_{1,n}^*,\dots,q_{n,n}^*\}$ be an optimal set of $n$--means with Voronoi cells $R_{j,n}$ and cell lengths $\Delta L_{j,n}=|R_{j,n}|$. Define
\[
w_{j,n}:=P(R_{j,n})=\int_{R_{j,n}}h(s)\,ds.
\]
Then the quadrature rule
\begin{equation}\label{eq:quad-opt}
Q_n(f)=\sum_{j=1}^n w_{j,n}f(q_{j,n}^*)
\end{equation}
is adapted to the distribution $P$. For smooth $f$ and small cells, one can Taylor expand $f$ around $q_{j,n}^*$:
\[
f(s)=f(q_{j,n}^*) + f'(q_{j,n}^*)(s-q_{j,n}^*) + \tfrac{1}{2}f''(q_{j,n}^*)(s-q_{j,n}^*)^2 + \cdots.
\]
Substituting into \(I_j\) gives
\begin{align*}
I_j&=
f(q_{j,n}^*)\underbrace{\int_{R_{j,n}}h(s)\,ds}_{=w_{j,n}}
+f'(q_{j,n}^*)\underbrace{\int_{R_{j,n}}(s-q_{j,n}^*)h(s)\,ds}_{=:A_j}
\\
&\quad+\frac12 f''(q_{j,n}^*)\underbrace{\int_{R_{j,n}}(s-q_{j,n}^*)^2h(s)\,ds}_{=:B_j}
+O\!\left(\int_{R_{j,n}}|s-q_{j,n}^*|^3h(s)\,ds\right).
\end{align*}

We define the local error as
\[
E_j=I_j-w_{j,n}f(q_{j,n}^*).
\]
Thus
\[
E_j = f'(q_{j,n}^*)A_j+\frac12 f''(q_{j,n}^*)B_j+O\!\left(\int_{R_{j,n}}|s-q_{j,n}^*|^3h(s)\,ds\right).
\]
Since \(q_{j,n}^*\) minimizes the local distortion functional
\[
F_j(q)=\int_{R_{j,n}}d_G(s,q)^2\,h(s)\,ds,
\]
and on a geodesic arc parameterized by arc--length we have \(d_G(s,q)=|s-q|\), it follows that
\[
F_j(q)=\int_{s_{j-1}}^{s_j}(s-q)^2h(s)\,ds,\qquad q\in\mathbb{R}.
\]
The minimizer satisfies
\[
\frac{dF_j}{dq}\Big|_{q=q_{j,n}^*}= -2\int_{s_{j-1}}^{s_j}(s-q_{j,n}^*)h(s)\,ds =0,
\]
which is exactly
\[
A_j=\int_{R_{j,n}}(s-q_{j,n}^*)h(s)\,ds=0.
\]

From Theorem~\ref{thm:main}, the first--order term cancels in the approximation error on each cell, and the leading contribution comes from the second derivative and the local second moment of $h$ on the cell.

For the asymptotic analysis, Theorem~\ref{thm:highres} yields the limiting point density
\begin{equation}\label{eq:quad-lambda}
\lambda(s)=\frac{h(s)^{1/3}}{\displaystyle\int_0^{L}h(u)^{1/3}\,du},
\end{equation}
such that asymptotically
\begin{equation}\label{eq:quad-cell-length}
\Delta L_{j,n}\sim \frac{1}{n\,\lambda(s_{j,n})}
\propto \frac{1}{n\,h(s_{j,n})^{1/3}},
\qquad s_{j,n}\in R_{j,n}.
\end{equation}
Moreover,
\[
w_{j,n}
= \int_{R_{j,n}}h(s)\,ds
\approx h(s_{j,n})\,\Delta L_{j,n}.
\]
Therefore,
\begin{equation}\label{eq:quad-weights-asymp}
w_{j,n}
\sim \frac{h(s_{j,n})}{n\,\lambda(s_{j,n})}
= \frac{h(s_{j,n})}{n}\cdot\frac{\displaystyle\int_0^{L}h(u)^{1/3}\,du}{h(s_{j,n})^{1/3}}
= \frac{\Big(\int_0^L h(u)^{1/3}\,du\Big)}{n}\,h(s_{j,n})^{2/3}.
\end{equation}
Thus, up to the common normalization factor $C=\int_0^L h(u)^{1/3}\,du$, the asymptotic quadrature weights behave as
\begin{equation}\label{eq:quad-weights-normalized}
w_{j,n}\propto h(s_{j,n})^{2/3},\qquad \sum_{j=1}^n w_{j,n}=1.
\end{equation}
Therefore, in the high--resolution limit, the quadrature nodes concentrate according to $h^{1/3}$, while the weights scale like $h^{2/3}$. In particular, for the uniform case $h\equiv 1/L$, one recovers equally spaced nodes and equal weights $w_{j,n}=1/n$, which correspond to the classical trapezoidal rule on an interval (or circle).

This construction provides an $L^2$--adapted quadrature rule on $\Gamma$ that is connected to the density $h$. It can be viewed as a manifold analogue of quantization--based integration schemes developed for Euclidean measures.

\section{Conclusions}
\label{sec:conclu}
In our article, we presented an asymptotic and algorithmic analysis for optimal quantization for nonuniform densities on spherical curves. We showed the intrinsic geodesic distortion and chordal distribution by the previously established centroid criteria and high-resolution asymptotic analysis. The distortion errors corresponding to both geodesic and chordal distances are also quantified, and we showed that they agreed upto order 2. We established optimality conditions for continuous, strictly positive nonuniform densities on geodesic curves, in the form of an intrinsic centroid condition and a geodesic boundary condition that characterizes an optimal Voronoi partition. By extending the standard result provided by Gersho and others  \cite{GershoGray1991}, we derived the point-density law $\lambda(s)\propto h(s)^{1/3}$ and the sharp asymptotic error formula. Theorem~\ref{thm:main} was applied to the von
Mises distribution to analyze including symmetry properties and high-resolution point densities. 
Finally, we used our results in several examples: (i) Optimal quantization analysis under chordal vs. geodesic distortion. (ii) Quantization analysis of mixture distributions on spherical curves. (iii) Explicit high-resolution quantizers and error constants for a cosine-modulated density on a great circle. (iv) Optimal quantizers for bimodal von Mises–type distributions, which shows how the optimal codepoints split between multiple modes according to the $h^{1/3}$ principle; and (v) An intrinsic, nonuniform quadrature rule on spherical arcs whose nodes and weights are generated from optimal quantizers, extending Euclidean/functional quantization-based integration schemes to the spherical setting \citep{Pag`esPrintems2003}.

Moreover, there can be several future possibilities based on our work. Particularly, instead of spherical curves, if we can consider some arbitrary geometries, such as triangular surfaces, polygonal surfaces, how the nonuniform density plays a role could be a wise direction for future endeavours.

\end{document}